\theoremstyle{theorem}
\newtheorem{thm}{Theorem}
\theoremstyle{definition}
\newtheorem{lem}[thm]{Lemma}
\newtheorem{claim}[thm]{Claim}
\theoremstyle{definition}
\newtheorem{definition}[thm]{Definition}
\newtheorem{defin}[thm]{Definition}
\newtheorem{remark}[thm]{Remark}
\newcommand{\Q}{\mathbb{Q}}
\newcommand{\R}{\mathbb{R}}
\newcommand{\N}{\mathbb{N}}
\newcommand{\nor}{\mathcal{N}}
\newcommand{\bs}{\boldsymbol{\Sigma}}
\newcommand{\bS}{\boldsymbol{\Sigma}}
\newcommand{\bp}{\boldsymbol{\Pi}}
\newcommand{\bP}{\boldsymbol{\Pi}}
\newcommand{\bd}{\boldsymbol{\Delta}}
\newcommand{\res}{\restriction}
\newcommand{\dt}{D_2(\bP^0_3)}
\newcommand{\ww}{\omega^\omega}
\newcommand{\labeq}[1]{\label{eq:#1}}
\newcommand{\refeq}[1]{(\ref{eq:#1})}
\newcommand{\labt}[1]{\label{thm:#1}}
\newcommand{\reft}[1]{Theorem~\ref{thm:#1}}
\newcommand{\refs}[1]{Section~\ref{section:#1}}
\newcommand{\labs}[1]{\label{section:#1}}
\newcommand{\labf}[1]{\label{fig:#1}}
\newcommand{\reff}[1]{Figure~\ref{fig:#1}}
\newcommand{\dimh}[1]{\hbox{$\dim_{\hbox{H}}$}\left( #1\right)}
\newcommand{\ppq}{\psi_{P,Q}}
\newcommand{\NQ}{\mathscr{N}(Q)}
\newcommand{\NP}{\mathscr{N}(P)}
\newcommand{\Nk}[2]{\mathscr{N}_{#2}( #1 )} 
\newcommand{\NkQ}{\mathscr{N}_k(Q)}
\newcommand{\DNQ}{\mathscr{DN}(Q)}
\newcommand{\RNQ}{\mathscr{RN}(Q)}
\newcommand{\RNk}[2]{\mathscr{RN}_{#2}( #1 )} 
\newcommand{\RNkQ}{\mathscr{RN}_k(Q)}
\newcommand{\RDN}{\RNQ \cap \DNQ \backslash \NQ}
\newcommand{\tqi}[1]{T_{Q,i}\left( #1 \right)}
\newcommand{\QNK}[2]{Q^{(#1)}_{#2}}
\newcommand{\PxNK}[2]{(P_x)^{(#1)}_{#2}}
\newcommand{\WDN}{\mathscr{WDN}(Q)}
\newcommand{\NN}{\mathbb{N}_2^{\mathbb{N}}}
\newcommand{\IB}{\mathcal{I}_{Q,j}(B)}
\newcommand{\floor}[1]{\left\lfloor #1 \right\rfloor}
\newcommand{\sB}{\mathcal{B}}
\newcommand{\sm}{\setminus}
\newcommand{\vq}{{Q}}
\newcommand{\ri}{\mathscr{R}}
\author[D. Airey]{Dylan Airey}
\address[D. Airey]{Department of Mathematics, Princeton University, Fine Hall, Washington
Road, Princeton, NJ 08544-1000, USA}
\email{dairey@math.princeton.edu}
\author[S. Jackson]{Steve Jackson}
\address[S. Jackson]{Department of Mathematics, University of North Texas, 
General Academics Building 435, 1155 Union Circle,  \#311430, Denton, TX 76203-5017, USA}
\email{stephen.jackson@unt.edu}
\author[B. Mance]{Bill Mance}
\address[B. Mance]{Uniwersytet im. Adama Mickiewicza w Poznaniu,
  Collegium Mathematicum, ul. Umultowska 87, 61-614 Pozna\'{n}, Poland}
\email{william.mance@amu.edu.pl}
\title{Descriptive complexity  in Cantor series}
\begin{document}

\begin{abstract}
A Cantor series expansion for a real number $x$ with respect to a basic
sequence $Q=(q_1,q_2,\dots)$, where $q_i \geq 2$, is a representation of the form
$x=a_0 + \sum_{i=1}^\infty \frac{a_i}{q_1q_2\cdots q_i}$ where $0 \leq a_i<q_i$.
These generalize ordinary base $b$ expansions
where $q_i=b$. Ki and Linton \cite{KiLinton} showed that for ordinary base $b$ expansions the
set of normal numbers is a $\bP^0_3$-complete set, establishing the exact complexity of this set.
In the case of Cantor series there are three natural notions of normality: normality, ratio normality,
and distribution normality (these notions are equivalent for base $b$ expansions). We show that for any
$Q$ the set $\DNQ$ of distribution normal number is $\bP^0_3$-complete, and if $Q$ is
$1$-divergent (i.e., $\sum_{i=1}^\infty \frac{1}{q_i}$ diverges) then the sets $\NQ$ and $\RNQ$
of normal and ratio normal numbers are $\bP^0_3$-complete. We further show that all
five non-trivial differences of these sets are $D_2(\bP^0_3)$-complete if $\lim_i q_i=\infty$
and $Q$ is $1$-divergent (the trivial case is $\NQ\setminus \RNQ=\emptyset$).
This shows that except for the containment
$\NQ\subseteq \RNQ$, these three notions are as independent as possible.
\end{abstract}

\maketitle

\section{Introduction}
\subsection{Review of Definability Notions}
In any topological space $X$, the collection of Borel sets $\sB(X)$ is the smallest 
$\sigma$-algebra containing the open sets. They are stratified into levels,
the Borel hierarchy, by defining $\bs^0_1=$ the open sets, $\bp^0_1=
\neg \bs^0_1= \{ X-A\colon A \in \bs^0_1\}=$the closed sets, 
and for $\alpha<\omega_1$ we let $\bs^0_\alpha$ be the collection of 
countable unions $A=\bigcup_n A_n$ where each $A_n \in \bp^0_{\alpha_n}$
for some $\alpha_n<\alpha$. We also let $\bp^0_\alpha=\neg \bs^0_\alpha$. 
Alternatively, $A\in \bp^0_\alpha$ if $A=\bigcap_n A_n$ where 
$A_n\in \bs^0_{\alpha_n}$ where each $\alpha_n<\alpha$. 
We also set $\bd^0_\alpha= \bs^0_\alpha \cap \bp^0_\alpha$, in particular 
$\bd^0_1$ is the collection of clopen sets. 
For any topological space, $\sB(X)=\bigcup_{\alpha<\omega_1} \bs^0_\alpha=
\bigcup_{\alpha<\omega_1}\bp^0_\alpha$.  All of the collections 
$\bd^0_\alpha$, $\bs^0_\alpha$, $\bp^0_\alpha$ are pointclasses, that is, they are closed 
under inverse images of continuous functions. 
A basic fact (see \cite{Kechris})
is that for any uncountable Polish space $X$, there is no collapse 
in the levels of the Borel hierarchy, that is, all the various 
pointclasses $\bd^0_\alpha$, $\bs^0_\alpha$, $\bp^0_\alpha$, for $\alpha <\omega_1$,
are all distinct. Thus, these levels of the Borel hierarchy can be used
to calibrate the descriptive complexity of a set. We say a set 
$A\subseteq X$ is $\bs^0_\alpha$ (resp.\ $\bp^0_\alpha$) {\em hard} 
if $A \notin \bp^0_\alpha$ (resp.\ $A\notin \bs^0_\alpha$). This says $A$ is 
``no simpler'' than a $\bs^0_\alpha$ set. We say $A$ is $\bs^0_\alpha$-{\em complete}
if $A\in \bs^0_\alpha\sm \bp^0_\alpha$, that is, $A \in \bs^0_\alpha$ and 
$A$ is $\bs^0_\alpha$ hard. This says $A$ is exactly at the complexity level 
$\bs^0_\alpha$. Likewise, $A$ is $\bp^0_\alpha$-complete if $A\in \bp^0_\alpha\sm \bs^0_\alpha$.

A set $D\subseteq X$ is in the class $D_2(\bP^0_3)$ if $D=A\sm B$ where $A, B\in \bP^0_3$.
A set $D$ is $D_2(\bP^0_3)$-hard if $X\sm D \notin D_2(\bP^0_3)$, and
$D$ is $D_2(\bP^0_3)$-complete if it is in $D_2(\bP^0_3)$ and is $D_2(\bP^0_3)$-hard.
As with the classes $\bS^0_\alpha$, $\bP^0_\alpha$, the class $D_2(\bP^0_3)$
has a universal set and so is non-selfdual, that is, it is not closed under
complements (we will define a particular complete set for  $D_2(\bP^0_3)$ in
\S\ref{section:lastcase}).

H.\ Ki and T.\ Linton \cite{KiLinton} proved that the set $\nor(b)$ of
base-$b$ normal numbers (Definition~\ref{normal} below) is
$\bp^0_3(\mathbb{R})$-complete.  Further work was done by V.\ Becher,
P.\ A.\ Heiber, and T.\ A.\ Slaman \cite{BecherHeiberSlamanAbsNormal} who
settled a conjecture of A.\ S.\ Kechris by showing that the set of
absolutely normal numbers is $\bp^0_3(\mathbb{R})$-complete.
Furthermore, V.\ Becher and T.\ A.\ Slaman \cite{BecherSlamanNormal}
proved that the set of numbers normal in at least one base is
$\bs^0_4(\mathbb{R})$-complete. In another direction, D.\ Airey, S.\ Jackson,
D.\ Kwietniak, and B.\ Mance \cite{AJKMSubshifts}, \cite{AJKMDynamics}
showed that for any dynamical system with
a weak form of the specification
property, the set of generic points for the system is $\bP^0_3$-complete. This result
generalizes the Ki-Linton result to many numeration systems other than the standard base $b$ one. 

\subsection{Normal Numbers}
We recall the definition of a normal number.


\begin{defin}\label{normal} 
  A real number $x$ is {\it normal of order $k$ in base $b$} if all blocks of digits of length $k$ in base $b$
  occur with relative frequency $b^{-k}$ in the $b$-ary expansion of $x$. We denote this set by $\Nk{b}{k}$.
  Moreover, $x$ is  {\it normal in base $b$} if it is normal of order $k$ in base $b$ for all natural numbers $k$.
  We denote the set of normal numbers in base $b$ by 
$$
\mathscr{N}(b):=\bigcap_{k \in \mathbb{N}} \Nk{b}{k}.
$$
\end{defin}

We also wish to mention one of the most fundamental and important results relating to normal numbers in base $b$.
The following is due to D. D. Wall in his Ph.D. dissertation \cite{Wall}.

\begin{thm}[D. D. Wall]\labt{wall}
A real number $x$ is normal in base $b$ if and only if the sequence $(b^nx)$ is uniformly distributed mod $1$.
\end{thm}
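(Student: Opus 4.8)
The plan is to build a dictionary between the combinatorics of the base-$b$ expansion of $x$ and the orbit of $x$ under the map $t\mapsto bt\bmod 1$, and then to bridge from $b$-adic intervals to arbitrary intervals by a sandwiching argument. First I would reduce to the case $x\in[0,1)$ (passing to the fractional part changes neither the digits after the radix point nor the numbers $\{b^nx\}$) and discard the $b$-adic rationals: for such $x$ the expansion is eventually constant, so $x$ is not normal, while $\{b^nx\}=0$ for all large $n$, so $(b^nx)$ is not uniformly distributed mod $1$, and both sides of the equivalence fail. Writing $x=0.a_1a_2a_3\cdots$ in base $b$, the crucial observation is that $\{b^nx\}=0.a_{n+1}a_{n+2}\cdots$, so a block $w=(w_1,\dots,w_k)$ occurs in the expansion of $x$ starting at position $n+1$ exactly when $\{b^nx\}$ lies in the $b$-adic interval $I_w:=[\beta_w,\beta_w+b^{-k})$ with $\beta_w=\sum_{j=1}^k w_jb^{-j}$. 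Excluding $b$-adic rationals is precisely what makes these intervals half-open and pairwise disjoint as $w$ ranges over the $b^k$ blocks of length $k$, so that the $I_w$ are exactly the $b^k$ $b$-adic subintervals of $[0,1)$ of length $b^{-k}$. Consequently the relative frequency of the block $w$ in the expansion of $x$ equals the frequency with which the sequence $(\{b^nx\})$ visits $I_w$, whenever either limit exists.

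With this dictionary in hand the easy direction is immediate: if $(b^nx)$ is uniformly distributed mod $1$ then for each $k$ every $b$-adic interval of length $b^{-k}$ is visited with asymptotic frequency equal to its length $b^{-k}$, hence every length-$k$ block occurs with relative frequency $b^{-k}$, so $x\in\Nk{b}{k}$ for every $k$ and thus $x\in\mathscr{N}(b)$.

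For the converse, normality of $x$ of all orders gives, via the dictionary, that every $b$-adic interval $I$ is visited by $(\{b^nx\})$ with frequency equal to its length. The remaining, and really the only non-formal, step is to upgrade this to arbitrary subintervals $[\alpha,\beta)\subseteq[0,1)$: I would fix $\varepsilon>0$, choose $k$ with $b^{-k}<\varepsilon$, and sandwich $[\alpha,\beta)$ between the union $J^-$ of those $b$-adic intervals of length $b^{-k}$ contained in $[\alpha,\beta)$ and the union $J^+$ of those that meet $[\alpha,\beta)$, noting that $J^\pm$ are finite disjoint unions of $b$-adic intervals and that $|J^\pm|=(\beta-\alpha)\pm O(b^{-k})$. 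Summing the known frequencies over the finitely many $b$-adic pieces of $J^-$ and of $J^+$, and using the inclusions $J^-\subseteq[\alpha,\beta)\subseteq J^+$, pins $\liminf$ and $\limsup$ of the frequency of visits to $[\alpha,\beta)$ between $\beta-\alpha-2\varepsilon$ and $\beta-\alpha+2\varepsilon$; letting $\varepsilon\to0$ gives uniform distribution mod $1$. (Alternatively, one notes that $b$-adic step functions are uniformly dense in $C(\R/\Z)$ and runs the same approximation against the exponentials in Weyl's criterion; the sandwiching plays the same role.) I expect the only real care needed anywhere is in this approximation step and in the handling of the $b$-adic rationals together with the half-open versus closed endpoint convention in the dictionary of the first step; everything else is bookkeeping.
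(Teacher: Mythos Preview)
Your argument is correct and is essentially the standard proof of Wall's theorem: the dictionary between occurrences of a block $w$ and visits of $(\{b^n x\})$ to the $b$-adic interval $I_w$ is exactly right, the easy direction is immediate, and the sandwiching of an arbitrary interval between inner and outer unions of $b$-adic intervals of level $k$ is the usual way to pass from $b$-adic intervals to general ones. The care you take with $b$-adic rationals and the half-open convention is appropriate and sufficient.

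There is nothing to compare with in the paper, however: the paper does not prove this theorem. It is quoted as a classical result of D.~D.~Wall (from his 1949 thesis), with the remark that it ``is not difficult to prove,'' and is used only as motivation for the definitions of $Q$-normality, $Q$-ratio normality, and $Q$-distribution normality. So your proposal supplies a proof where the paper simply cites one.
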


While it is not difficult to prove \reft{wall}, its importance in the theory of normal numbers can not be understated.
Large portions of the theory of normal numbers in base $b$ make use of \reft{wall}.
We provide an example of a theorem  that provides motivation for the main problem studied in this paper.

\begin{thm}[D. D. Wall]\footnote{A full characterization of $r \in \mathbb{R}$
such that $r+\mathscr{N}(b) \subseteq \mathscr{N}(b)$ is given in \cite{RauzyNormalPreserving}.} \labt{normalpreserves}
  For all rational numbers $q$ (for the second inclusion we assume also $q \neq 0$)
  and integers $b \geq 2$, we have 
\begin{align*}
q+\mathscr{N}(b) &\subseteq \mathscr{N}(b);\\
q\mathscr{N}(b) &\subseteq \mathscr{N}(b).
\end{align*}
That is, normality in base $b$ is preserved by rational addition and multiplication.
\end{thm}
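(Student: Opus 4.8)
The plan is to reduce, via \reft{wall}, to statements about uniform distribution modulo one, and then to decompose each inclusion into a handful of elementary operations. Write the rational as $q=p/r$ in lowest terms with $r\ge1$ (and $p\neq0$ in the multiplicative case), write $e(t)=e^{2\pi i t}$, and recall from \reft{wall} that $x\in\mathscr{N}(b)$ iff $(\{b^nx\})_{n\ge1}$ is equidistributed in $[0,1)$. First I would record that $\mathscr{N}(b)$ is invariant under: (i) integer translation $x\mapsto x+a$, which changes only the integer part of the $b$-ary expansion and hence no block frequency; (ii) negation $x\mapsto-x$, since $(\{b^n(-x)\})=(\{-b^nx\})$ and $t\mapsto\{-t\}$ preserves Lebesgue measure on $[0,1)$; and (iii) multiplication $x\mapsto mx$ by a positive integer $m$, by Weyl's criterion, since equidistribution of $(\{b^nx\})$ gives $\tfrac1N\sum_{n\le N}e(h'b^nx)\to0$ for all nonzero integers $h'$, so taking $h'=hm$ yields $\tfrac1N\sum_{n\le N}e(hb^n(mx))\to0$ for all nonzero $h$. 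Granting, in addition, invariance under (iv) division $x\mapsto x/r$ by a positive integer $r$, both inclusions follow: from $x\in\mathscr{N}(b)$, (iii) and (i) give $rx+p\in\mathscr{N}(b)$ and then (iv) gives $x+q=(rx+p)/r\in\mathscr{N}(b)$; likewise (iii), (iv), and possibly (ii) give $qx=\pm(|p|x)/r\in\mathscr{N}(b)$. Thus the whole theorem comes down to (iv), where the real content lies.

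For (iv), since $0\le(\floor{b^nx}\bmod r)+\{b^nx\}<r$ one has the exact identity
\[
\Bigl\{\tfrac{b^nx}{r}\Bigr\}=\tfrac1r\bigl(s_n+\{b^nx\}\bigr),\qquad s_n:=\floor{b^nx}\bmod r\in\{0,1,\dots,r-1\}.
\]
The map $(s,t)\mapsto\tfrac{s+t}{r}$ is a measure-preserving bijection from $\{0,\dots,r-1\}\times[0,1)$, carrying normalized counting measure times Lebesgue measure, onto $[0,1)$; hence $(\{b^nx/r\})_n$ is equidistributed in $[0,1)$ if and only if the pairs $(s_n,\{b^nx\})_n$ are equidistributed in $\Z_r\times(\R/\Z)$. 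I would prove the latter by Weyl's criterion on $\Z_r\times(\R/\Z)$: it suffices to show, for all $(l,h)\in\{0,\dots,r-1\}\times\Z$ with $(l,h)\neq(0,0)$,
\[
\frac1N\sum_{n=1}^{N}e\Bigl(\tfrac{ls_n}{r}+hb^nx\Bigr)\longrightarrow0,
\]
using $e(h\{b^nx\})=e(hb^nx)$. The case $l=0$ is precisely $\tfrac1N\sum_{n\le N}e(hb^nx)\to0$, which holds by \reft{wall} since $h\neq0$. The case $l\neq0$ is the crux and the main obstacle in the whole argument.

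To handle $l\neq0$, iterate the digit recursion $s_{n+1}\equiv bs_n+D_{n+1}\pmod r$ (where $D_{n+1}=\floor{b\{b^nx\}}$ is the $(n+1)$-st base-$b$ digit): for every fixed $L$ and $n>L$ we get $s_n\equiv b^Ls_{n-L}+v_n\pmod r$, with $v_n\in\{0,1,\dots,b^L-1\}$ the value of the digit block $w=(D_{n-L+1},\dots,D_n)$. The point is that $s_{n-L}$ is a function of the digits \emph{before} this block, while $\{b^nx\}=0.D_{n+1}D_{n+2}\cdots$ is a function of the digits \emph{after} it, and normality of $x$ supplies the decoupling across the block, because for words $u,w,u'$ one has $\mathrm{freq}(uwu')=\mathrm{freq}(uw)\,\mathrm{freq}(wu')/\mathrm{freq}(w)$ (all being the relevant powers of $1/b$). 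Using this, grouping $n\le N$ by the block $w$ and summing, the Weyl sum is, up to $o(N)$, a $w$-independent quantity of modulus $\le1$ times $b^{-L}\sum_{v=0}^{b^L-1}e(lv/r)$; and $\bigl|\sum_{v=0}^{b^L-1}e(lv/r)\bigr|\le|1-e(l/r)|^{-1}\le r$ since $l\not\equiv0\pmod r$. Letting $L\to\infty$ drives the Weyl sum to $0$, as needed.

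The technical difficulty concentrated in the last step is that $s_{n-L}$ depends on the entire prefix $D_1,\dots,D_{n-L}$, not on boundedly many coordinates, so the decoupling must be carried out with some care: split $r=r_1r_2$ where every prime factor of $r_1$ divides $b$ and $\gcd(r_2,b)=1$, so that the $\Z_{r_1}$-component of $s_m$ is a bounded-coordinate function of the digits (since $r_1\mid b^c$ for a fixed $c$), while the $\Z_{r_2}$-component can be linearized in a single state variable after a van der Corput shift by a multiple of $\mathrm{ord}_{r_2}(b)$; one then applies the frequency identity above to the resulting sums. Alternatively, the entire treatment of (iv) may be replaced by a reference: the invariance of $\mathscr{N}(b)$ under affine maps with nonzero rational coefficients is classical, due to Wall, with the sharp result for irrational multipliers given in \cite{RauzyNormalPreserving}; the sketch above is included only to keep the account self-contained.
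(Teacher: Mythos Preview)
The paper does not prove this theorem. \reft{normalpreserves} is stated as background, attributed to D.\ D.\ Wall (with a footnote pointing to \cite{RauzyNormalPreserving} for the sharp characterization), and is used only as motivation for the questions studied later; no argument for it appears anywhere in the paper. So there is no ``paper's own proof'' to compare against.

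Your write-up is a reasonable self-contained sketch of the classical argument. The reduction via \reft{wall} to the four invariances (integer shift, negation, integer multiplication, division by a positive integer) is clean and correct, and steps (i)--(iii) are fine as written. The identity $\{b^n x/r\}=(s_n+\{b^n x\})/r$ and the reformulation as equidistribution of the pair $(s_n,\{b^n x\})$ in $\Z_r\times(\R/\Z)$ are correct, and the Weyl-criterion case $l=0$ is immediate. The case $l\neq 0$ is, as you yourself flag, only outlined: the decoupling of $s_{n-L}$ from $\{b^n x\}$ via the block $w$ and the frequency identity is the right idea, but the passage ``the Weyl sum is, up to $o(N)$, a $w$-independent quantity of modulus $\le 1$ times $b^{-L}\sum_v e(lv/r)$'' hides all the work---in particular, the dependence of $s_{n-L}$ on the entire prefix really does force the $r=r_1 r_2$ splitting (or an equivalent device) that you mention, and that step would need to be written out in full for the argument to stand on its own. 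Given that the paper itself is content to cite Wall, your closing suggestion---simply invoking the classical result---is exactly what the authors do.
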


Moreover, \reft{wall} suggests a dynamical interpretation of normality which allows the definition of
normality to be extended to other expansions such as the regular continued fraction expansion,
the L\"{u}roth series expansion, and the $\beta$-expansions.
See \cite{ErgNum} for a basic treatment and introduction to this idea.

In this paper, we are interested in a class of expansions known as the $Q$-Cantor series expansions
that includes the $b$-ary expansions as a special case, but do not admit an extension of \reft{wall}.
The study of normal numbers and other statistical properties of real numbers with respect to
large classes of Cantor series expansions was  first done by P.\ Erd\H{o}s
and A.\ R\'{e}nyi in \cite{ErdosRenyiConvergent} and \cite{ErdosRenyiFurther} and by
A.\ R\'{e}nyi in \cite{RenyiProbability}, \cite{Renyi}, and \cite{RenyiSurvey} and by P.\ Tur\'{a}n in \cite{Turan}.

The $Q$-Cantor series expansions, first studied by G.\ Cantor in \cite{Cantor},
are a natural generalization of the $b$-ary expansions.
G.\ Cantor's motivation to study the Cantor series expansions was to extend the
well-known proof of the irrationality of the number $e=\sum 1/n!$ to a larger class of numbers.
Results along these lines may be found in the monograph of J.\ Galambos \cite{Galambos}. 
Let $\mathbb{N}_k:=\mathbb{Z} \cap [k,\infty)$. If $Q \in \NN$, then we say that $Q$ is a {\it basic sequence}.
Given a basic sequence $Q=(q_i)_{i=1}^{\infty}$, the {\it $Q$-Cantor series expansion}
of a real number $x$  is the (unique)
\footnote{Uniqueness can be proved in the same way as for the $b$-ary expansions.}
expansion of the form
\begin{equation} \labeq{cseries}
x=a_0+\sum_{i=1}^{\infty} \frac {a_i} {q_1 q_2 \cdots q_i}
\end{equation}
where $a_0=\floor{x}$ and $a_i$ is in $\{0,1,\ldots,q_i-1\}$ for $i\geq 1$
with $a_i \neq q_i-1$ infinitely often.
We abbreviate \refeq{cseries} with the notation $x=a_0.a_1a_2a_3\ldots$ w.r.t.\ $Q$.
If $I=[i,j]$ is an interval in $\N$ and the basic sequence $Q$ is understood, we let,
with a slight abuse of notation,
$x \res I$ denote the sequence of digits $a_i,\dots, a_j$. 


For a basic sequence $Q=(q_i)$, a block $B=(e_1, e_2, \cdots, e_k)\in \omega^{<\omega}$,
and a natural number $j$, define
$$
\IB = \begin{cases}
1 &\text{ if } e_1 < q_j, e_2 < q_{j+1}, \cdots, e_k < q_{j+k-1} \\
0 &\text{ otherwise}
\end{cases}
$$
and let
\begin{equation} \label{eqn:Qn}
Q_n(B)=\sum_{j=1}^n \frac {\IB} {q_j q_{j+1} \cdots q_{j+k-1}}.
\end{equation}

Let
$$
Q_n^{(k)}:=\sum_{j=1}^n \frac {1} {q_j q_{j+1} \cdots q_{j+k-1}} \hbox{ and }
T_{Q,n}(x):=\left(\prod_{j=1}^n q_j\right) x \bmod{1}.
$$

$Q_n(B)$ gives the expected number of occurrences of the block $B$ in the Cantor series 
expansion of $x$ 
with a starting position in $[1,n]$. We say $B$ has {\em infinite expectation} if $\lim_{n \to \infty}
Q_n(B)=\infty$. 
$Q_n^{(k)}$ is the expected number of occurrences of $0_k$ (the length $k$ block of $0$s)
with a start in $[1,n]$ (which is also the expected number of occurrences of $1_k$).
We also let $Q_{m,n}(B)=\sum_{j=m}^n \frac {\IB} {q_j q_{j+1} \cdots q_{j+k-1}}$,
which is expected number of occurrences of $B$ with a start in $[m,n]$.

A basic sequence $Q$ is {\it $k$-divergent} if
$\lim_{n \rightarrow \infty} Q_n^{(k)}=\infty$,  {\it fully divergent} if $Q$ is
$k$-divergent for all $k$, and {\it $k$-convergent} if it is not $k$-divergent. 
A basic sequence $Q$ is {\it infinite in limit} if $q_i \rightarrow \infty$.

For a block $B=(e_1,\dots,e_k)$ as above we let $|B|=k$ denote the length of $B$, and let
$\| B\|=(e_1+1)+\cdots+(e_k+1)$. For $1 \leq t \leq k$ we let $B(t)=e_t$ denote the $t$th element
of the block $B$. 

For $x$ a real with $Q$-Cantor series expansion
$a_0.a_1a_2\cdots$, we let $N^Q_n(B,x)$ be the number of $i$
with $1\leq i \leq n$ such that $x\res [i, i+|B|-1]=B$.
We let $N^Q_{m,n}(B,x)$ be the number of $i \in [m, n]$
with $x\res [i,i+|B|-1]=B$. This counts the number of occurrences of the block $B$
with a start in the interval $[m,n]$.

Motivated by \reft{wall}, we make the following definitions of normality for Cantor series expansions.

\begin{definition} A real number $x$  is {\it $Q$-normal of order $k$}
if for all blocks $B$ of length $k$  such that $\lim_{n \to \infty} Q_n(B) = \infty$,
\begin{equation}\labeq{NQdef}
\lim_{n \rightarrow \infty} \frac {N_n^Q (B,x)} {Q_n(B)}=1.
\end{equation}
We let $\Nk{Q}{k}$ be the set of numbers that are $Q$-normal of order $k$. The real number $x$ is {\it $Q$-normal} if
$x \in \NQ := \bigcap_{k=1}^{\infty} \Nk{Q}{k}.$
\end{definition}

\begin{definition}
A real number $x$ is {\it $Q$-ratio normal of order $k$} (here we write $x \in \RNk{Q}{k}$)
if for all blocks $B_1$ and $B_2$ of length $k$  such that 
\\ $\lim_{n \to \infty} \min(Q_n(B_1),Q_n(B_2)) = \infty$, we have
\begin{equation}\labeq{RNQdef}
\lim_{n \to \infty} \frac {N_n^Q (B_1,x)/Q_n(B_1)} {N_n^Q (B_2,x)/Q_n(B_2)}=1.
\end{equation}
We say that $x$ is {\it $Q$-ratio normal} if
$x \in \RNQ := \bigcap_{k=1}^{\infty} \RNk{Q}{k}.$
\end{definition}

\begin{definition}
A real number~$x$ is {\it $Q$-distribution normal} if
the sequence $(T_{Q,n}(x))_{n=0}^\infty$ is uniformly distributed mod $1$.
Let $\DNQ$ be the set of $Q$-distribution normal numbers.
\end{definition}

We note that by \reft{wall}, the analogous versions of the above definitions are equivalent for the $b$-ary expansions.
The situation is far more interesting in the case that $Q$ is infinite in limit and fully divergent.

\begin{figure}
\caption{}
\labf{figure1}
\begin{tikzpicture}[>=stealth',shorten >=1pt,node distance=3.4cm,on grid,initial/.style    ={}]
  \node[state]          (NQ)                        {$\mathsmaller{\NQ}$};
  \node[state]          (RNQ) [above right =of NQ]    {$\mathsmaller{\RNQ}$};
  \node[state]          (RNQDNQ) [below right=of RNQ]    {$\mathsmaller{\RNQ \cap \DNQ}$};
  \node[state]          (NQDNQ) [below right =of NQ]    {$\mathsmaller{\NQ \cap \DNQ}$};
  \node[state]          (DNQ) [above right=of RNQDNQ]    {$\mathsmaller{\DNQ}$};
\tikzset{mystyle/.style={->,double=black}} 
\tikzset{every node/.style={fill=white}} 
\path (RNQDNQ)     edge [mystyle]    (RNQ)
      (RNQDNQ)     edge [mystyle]     (DNQ)
      (NQ)     edge [mystyle]     (RNQ)
      (NQDNQ)     edge [mystyle]     (RNQDNQ)
      (NQDNQ)     edge [mystyle]     (NQ);
\tikzset{mystyle/.style={<->,double=black}}
\end{tikzpicture}
\end{figure}

It was proved in \cite{ppq1} that the directed graph in \reff{figure1} gives the
complete containment relationships between these notions when $Q$ is infinite in limit and fully divergent.
The vertices are labeled with all possible intersections of one, two, or three choices
of the sets $\NQ$, $\RNQ$, and $\DNQ$, where we know that $\NQ=\NQ \cap \RNQ$ and $\NQ \cap \DNQ=\NQ \cap \DNQ \cap \RNQ$.
The set labeled on vertex $A$ is a subset of the set labeled on vertex $B$ if and only if
there is a directed path from $A$ to $B$.  For example, $\NQ \cap \DNQ \subseteq \RNQ$, so all
numbers that are $Q$-normal and $Q$-distribution normal are also $Q$-ratio normal. 

We remark that all inclusions suggested from \reff{figure1} are either
easily proved ($\NQ \subseteq \RNQ$) or are trivial.
The difficulty comes in showing a lack of inclusion.
The most challenging of these is to prove that there is a basic sequence $Q$ where $\RDN\neq \emptyset$.

As the equivalence of these definitions is so key to the study of
normality in base $b$, it is natural to ask how ``independent'' these
sets are.  There have been two approaches to measure this.  First, it
is natural to ask if, for example, there is a simple condition $P(x)$
where if $x$ is $Q$-normal and $P(x)$ also holds, we will have the $x$
is $Q$-distribution normal (or any other permutation of definitions of
normality).  One example of such an attempt to find a condition $P(x)$
is motivated by \reft{normalpreserves}.  This theorem strongly fails
when $Q$ is infinite in limit and fully divergent: $Q$-distribution
normality is preserved only by non-zero integer multiplication while
$Q$-normality and $Q$-ratio normality aren't even preserved by integer
multiplication.  In fact, the easiest ways to construct members of
$\NQ \backslash \DNQ$ is to use the techniques presented in
\cite{ppq1} to construct members of the (surprisingly) non-empty set
$$ \{x \in \NQ : nx \notin \NQ \forall n \in \mathbb{N}_2\}.
$$ This motivated Samuel Roth to ask the third author if it is true
that $nx \in \NQ$ for all natural numbers $n$ implies that $x \in
\DNQ$ at the 2012 RTG conference: Logic, Dynamics and Their
Interactions, with a Celebration of the Work of Dan Mauldin in Denton,
Texas.  This question was later strongly shown to be false in
\cite{AireyManceVandehey} as it was shown that there exist basic
sequences $Q$ such that
$$ \dimh{ \{ x \in \mathbb{R} : rx+s \in \NQ \backslash \DNQ \forall r
\in \Q \backslash \{0\}, s \in \Q \}}=1.
$$ Any other attempt to find such an additional condition that would
allow one to get from one form of normality to another has thus far
failed.

The second method has been to attempt to find the ``size'' of the
difference sets suggested by \reff{figure1}.  All of these difference
sets are meager and have zero measure.  The Hausdorff dimension of
most of these difference sets has been calculated in
\cite{AireyManceHDDifference}.  In particular, when $Q$ is infinite in
limit and fully divergent, all non-empty difference sets except for
$\NQ \backslash \DNQ$ are known to have full Hausdorff dimension.  The
Hausdorff dimension of $\NQ \backslash \DNQ$ is known to have full
Hausdorff dimension only for a small class of infinite in limit and fully
divergent basic sequences $Q$.

Another approach to measuring the difference sets is to determine the exact descriptive complexity of
these sets. If we show that for two of these classes (for a given $Q$) the difference set is
$D_2(\bP^0_3)$-complete, then the difference has maximum logical complexity.
One of the  main results of this paper, Theorem~\ref{differences},  is to establish this fact for the five non-trivial
difference sets. As we mention in some examples below,
this can be used to rule out potential theorems connecting the different classes.

\begin{remark} \label{equivcond}
When $Q$ is infinite in limit and $k$-divergent,
conditions \refeq{NQdef} and \refeq{RNQdef} can be replaced by
$$ \lim_{n \rightarrow \infty} \frac {N_n^Q (B,x)} {Q_n^{(k)}}=1
\hbox{ and } \lim_{n \to \infty} \frac {N_n^Q (B_1,x)} {N_n^Q
(B_2,x)}=1,
$$ respectively.  This class of expansions will be important for us
throughout this paper.  Moreover, $\Nk{Q}{k}=\RNk{Q}{k}=\mathbb{R}$ if
and only if $Q$ is $k$-convergent.

\end{remark}

\begin{remark} \label{rdn}
Assuming $Q=(q_i)$ is infinite in limit, we have the following easy facts about distribution normality.
A  real $x$ with $Q$-Cantor series expansion
$a_0.a_1a_2\cdots$ is in $\DNQ$ iff the sequence $(\frac{a_i}{q_i})$
is uniformly distributed mod 1. If $y=b_0.b_1b_2\cdots$ and $\lim_{i \to \infty} \frac{a_i-b_i}{q_i}
=0$, then $x \in \DNQ$ iff $y \in \DNQ$. Also, if $\{ i \colon a_i \neq b_i\}$
has density $0$, then $x \in \DNQ$ iff $y \in \DNQ$.
\end{remark}


We will need the following theorem of \cite{Mance4}.
\begin{thm}\labt{measure}
The sets $\NQ, \RNQ$, and $\DNQ$ are sets of full measure for all basic sequences $Q$.\footnote{  The definitions of $Q$-normality and $Q$-ratio normality that were used were less general, but the general result holds with only small modification for our more general definition.}
\end{thm}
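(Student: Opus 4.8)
The plan is to prove the three statements separately, using that with respect to Lebesgue measure on $[0,1)$ the Cantor digits $a_1,a_2,\dots$ of a random $x$ are independent, with $a_i$ uniformly distributed on $\{0,1,\dots,q_i-1\}$ (proved exactly as in the base-$b$ case: conditioned on $a_1,\dots,a_{i-1}$, the point $x$ is uniform on a cylinder of length $1/(q_1\cdots q_{i-1})$, which makes $a_i$ uniform and independent of the earlier digits). Since $\NQ$, $\RNQ$ and $\DNQ$ are all invariant under integer translation it suffices to establish full measure on $[0,1)$. Below $\mathbf{e}(t)$ denotes $e^{2\pi i t}$.

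For $\DNQ$ I would invoke Weyl's criterion: $x\in\DNQ$ iff $\frac1N\sum_{n<N}\mathbf{e}(h\,T_{Q,n}(x))\to 0$ for every $h\in\Z\setminus\{0\}$. Fix such an $h$ and set $S_N(x)=\sum_{n<N}\mathbf{e}(h\,T_{Q,n}(x))$. The crux is that for $n\ne m$ one has $\int_0^1\mathbf{e}\big(h(T_{Q,n}(x)-T_{Q,m}(x))\big)\,dx=0$ as soon as $q_{\min(n,m)+1}\cdots q_{\max(n,m)}>|h|$, in particular whenever $|n-m|>\log_2|h|$ since each $q_i\ge 2$; one sees this (say $n<m$) by splitting $[0,1)$ into the cylinders on which $\floor{(q_1\cdots q_m)x}$ is constant, noting that on each such cylinder $T_{Q,n}(x)$ is an affine image of $T_{Q,m}(x)$ with a shift of the form $r/(q_{n+1}\cdots q_m)$ depending only on the cylinder, and summing the resulting complete geometric sums of roots of unity. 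Hence $\int_0^1|S_N(x)|^2\,dx\le(2\log_2|h|+1)N$, so $S_N/N\to 0$ in $L^2$ at rate $O(N^{-1/2})$; along $N=m^2$, Markov's inequality together with Borel--Cantelli gives $S_{m^2}(x)/m^2\to 0$ almost everywhere, and the trivial bound $|S_N(x)-S_{m^2}(x)|\le 2m+1$ for $m^2\le N<(m+1)^2$ fills in the remaining indices. Intersecting over the countably many $h$ shows $\DNQ$ has full measure.

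For $\NQ$ I would fix $k$ and a block $B$ of length $k$ with $Q_n(B)\to\infty$; only such blocks impose a condition, and there are countably many blocks altogether. Put $X_i(x)=1$ if $x\res[i,i+k-1]=B$ and $0$ otherwise, so $N_n^Q(B,x)=\sum_{i=1}^n X_i$, and independence of the digits gives $\mathbb{E}[X_i]=\IBn{i}/(q_i\cdots q_{i+k-1})$, hence $\mathbb{E}[N_n^Q(B,x)]=Q_n(B)$. Also $X_i$ and $X_j$ are independent when $|i-j|\ge k$, while for $0<j-i<k$ the joint occurrence $X_i=X_j=1$ pins down $j-i$ additional digit values (each contributing a factor $\le\tfrac12$), so $\mathbb{E}[X_iX_j]\le 2^{-(j-i)}\mathbb{E}[X_i]$; summing, $\mathrm{Var}\big(\sum_{i=1}^n X_i\big)\le 3\,Q_n(B)$, uniformly in $n$. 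Since the increments $Q_n(B)-Q_{n-1}(B)$ are at most $2^{-k}\le 1$, the indices $n_m=\min\{n:Q_n(B)\ge m^2\}$ satisfy $m^2\le Q_{n_m}(B)<m^2+1$, so Chebyshev's inequality makes $\sum_m\mathbb{P}\big(|N_{n_m}^Q(B,x)-Q_{n_m}(B)|>\varepsilon Q_{n_m}(B)\big)$ converge; Borel--Cantelli then gives $N_{n_m}^Q(B,x)/Q_{n_m}(B)\to 1$ a.e., and monotonicity of $n\mapsto N_n^Q(B,x)$ and of $n\mapsto Q_n(B)$, together with $Q_{n_{m+1}}(B)/Q_{n_m}(B)\to 1$, upgrades this to $N_n^Q(B,x)/Q_n(B)\to 1$ a.e. Intersecting over all such $B$ shows $\NQ$ has full measure. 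Finally, if $x\in\NQ$ and $\min(Q_n(B_1),Q_n(B_2))\to\infty$ for blocks $B_1,B_2$ of the same length, then $N_n^Q(B_i,x)/Q_n(B_i)\to 1$ for $i=1,2$, so their ratio tends to $1$; hence $\NQ\subseteq\RNQ$, and $\RNQ$ has full measure as well.

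The one point requiring care is that the second-moment estimates be uniform in $n$ (resp.\ in $N$), so that the Borel--Cantelli step applies to an arbitrary divergent $Q_n(B)$ rather than one with a prescribed growth rate; this is precisely the role of the bounded-increment bound $Q_n(B)-Q_{n-1}(B)\le 2^{-k}$ and the geometric bound $\mathbb{E}[X_iX_j]\le 2^{-(j-i)}\mathbb{E}[X_i]$, and the rest is routine bookkeeping. The footnote's caveat is harmless here: the argument is written for the stated general definitions, quantifying over blocks $B$ with $Q_n(B)\to\infty$ rather than over all blocks.
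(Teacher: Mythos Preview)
Your argument is correct, but note that the paper does not actually supply a proof of this theorem: it is quoted from \cite{Mance4} and used as a black box. So there is no ``paper's own proof'' to compare against; you have instead produced a clean self-contained proof, which is more than what the paper itself does here.

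A couple of minor remarks on your write-up. For $\DNQ$, your orthogonality step is more elaborate than necessary: since $\mathbf{e}(hT_{Q,n}(x))=\mathbf{e}(h\,q_1\cdots q_n\,x)$ and $h(q_1\cdots q_n-q_1\cdots q_m)$ is a nonzero integer for every $n\neq m$, one has $\int_0^1 \mathbf{e}\big(h(T_{Q,n}(x)-T_{Q,m}(x))\big)\,dx=0$ outright, with no condition on $|n-m|$; thus $\int_0^1|S_N|^2\,dx=N$ exactly, and the cylinder argument and the $\log_2|h|$ cutoff can be dropped. For $\NQ$, your variance bound and the subsequence-plus-monotonicity trick are standard and correct; the key observation that $Q_n(B)-Q_{n-1}(B)\le 2^{-k}$ guarantees the subsequence $(n_m)$ hits every scale is exactly what is needed to handle arbitrary divergent $Q_n(B)$. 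The deduction of $\RNQ$ from $\NQ\subseteq\RNQ$ is immediate. All of this is in the spirit of the original source, which likewise proceeds via second-moment estimates and Borel--Cantelli.
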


\subsection{Statement of Results}

We will prove the following theorems. First we address the complexity of the
various normality classes themselves. Theorems~\ref{thm:DNQ} and \ref{NandRN}
can be seen as generalizations of the Ki-Linton result as $\NQ$, $\RNQ$, and $\DNQ$
all coincide when $Q$ is the constant $b$ sequence. The proofs, particularly that of
Theorem~\ref{NandRN}, have however significant extra complications.

\begin{thm} \label{thm:DNQ}
For all basic sequences $Q$, the set $\DNQ$ is $\bp^0_3$-complete.
\end{thm}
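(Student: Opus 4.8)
The plan is to establish membership in $\bp^0_3$ and $\bp^0_3$-hardness separately.

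For the upper bound, I would use Remark~\ref{rdn}: for $Q$ infinite in limit, $x \in \DNQ$ iff the sequence $(a_i/q_i)$ is uniformly distributed mod $1$, which by Weyl's criterion (or directly, working with a countable basis of intervals with rational endpoints) is a countable conjunction, over rational $0 \le r < s \le 1$, of the statement $\lim_n \frac{1}{n}\#\{i \le n : r \le a_i/q_i < s\} = s - r$. Each such limit statement is of the form $\forall \epsilon\, \exists N\, \forall n \ge N\, |\cdots| < \epsilon$, i.e.\ $\bp^0_3$, so the whole set is $\bp^0_3$. When $Q$ is not infinite in limit one must instead work directly with $(T_{Q,n}(x))$, but the same Weyl-criterion counting argument gives a $\bp^0_3$ description: for each test interval, the averaged counting function converging is $\forall\epsilon\,\exists N\,\forall n$, and the map $x \mapsto (T_{Q,n}(x))_n$ is continuous (piecewise), so no complexity is added. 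I would verify carefully that the "infinitely often $a_i \ne q_i - 1$" side condition does not interfere — it is merely a $\bp^0_2$ restriction ensuring uniqueness of the expansion and is automatically satisfied off a closed nowhere dense set.

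For hardness, I would exploit the fact that the Ki--Linton theorem already gives a $\bp^0_3$-complete set, namely $\mathscr{N}(b)$ for a fixed base $b$, and that for the constant sequence $Q = (b,b,b,\dots)$ we have $\DNQ = \mathscr{N}(b)$ by Wall's theorem (\reft{wall}). But since $Q$ is arbitrary, I cannot reduce to the constant case directly; instead I would build a continuous reduction $f \colon \ww \to \R$ (or from a known $\bp^0_3$-complete subset of $\ww$, e.g.\ $\{x : \lim_i x(i) \text{ does not exist}\}$-type sets, or more precisely the standard $\bp^0_3$-complete set $\{\alpha : \forall m\, \exists^\infty n\, \alpha(n) = m\}$ fails — rather the set $P_3 = \{\alpha \in (\omega^\omega)^\omega : \forall k\, \alpha_k \to \infty\}$ type) such that $\alpha \in P_3 \iff f(\alpha) \in \DNQ$. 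The idea is to use the digits $a_i$ of the $Q$-Cantor series expansion as free parameters: given $\alpha$, I prescribe the digit sequence so that the empirical distribution of $(a_i/q_i)$ is uniform precisely when $\alpha$ passes the $\bp^0_3$ test. Concretely, I would partition $\N$ into consecutive blocks whose lengths grow fast, and on the $k$-th group of blocks use $\alpha$ to decide whether to "spoil" uniformity (by concentrating digits near $0$) or "repair" it, so that a $\liminf/\limsup$ discrepancy persists iff $\alpha \notin P_3$. One must use the $1$-divergence or at least handle a general $Q$: since $\DNQ$ only concerns the sequence $(a_i/q_i) \in [0,1)^\N$, and every such sequence is realizable by some $x$ (the expansion map is onto), the reduction reduces to the known fact that $\{z \in [0,1)^\N : z \text{ is u.d. mod } 1\}$ is $\bp^0_3$-complete in the product space, transported via the continuous digit-assignment map.

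The main obstacle, I expect, is making the hardness reduction uniform in $Q$ while controlling the side constraint $a_i \ne q_i - 1$ infinitely often and, when $Q$ is \emph{not} infinite in limit (e.g.\ $q_i$ constant or bounded), handling the fact that $\DNQ$ is then governed by $(T_{Q,n}(x))$ rather than $(a_i/q_i)$ — there the digits are finitely many symbols and I genuinely need a Ki--Linton-style combinatorial construction rather than a soft reduction from u.d.\ sequences in $[0,1)^\N$. I would therefore split into cases: if $q_i \to \infty$, use the soft argument on $(a_i/q_i)$; if $\liminf q_i < \infty$, pass to a subsequence of indices where $q_i$ is eventually constant equal to some $b$, freeze the other digits harmlessly (contributing a density-zero or vanishing perturbation, using Remark~\ref{rdn}), and invoke/adapt Ki--Linton on that subsequence. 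Checking that the frozen-digit perturbation does not affect distribution normality, and that the reduction map is continuous and total into $\R$ with unique $Q$-expansions, is the delicate bookkeeping.
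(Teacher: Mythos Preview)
Your upper-bound sketch is fine and is more explicit than the paper, which takes $\DNQ \in \bp^0_3$ as understood. Your main hardness idea --- partition $\N$ into fast-growing blocks and, on block $n$, either leave digits in a ``good'' uniformly-distributed state or ``spoil'' by concentrating near $0$, according to the size of $x(n)$ --- is exactly what the paper does. The paper's clean execution is to fix once and for all a reference point $z \in \DNQ$ and, on block $I_n$, keep most digits equal to $z_i$ but, whenever $T_{Q,i}(z)$ lands in $[0,\tfrac{1}{x'(n)+2})$, bump $a_i$ upward so that $T_{Q,i}(\varphi(x)) > \tfrac{1}{x'(n)+2}$. If $x(n) \to \infty$ the set of altered digits has density $0$ and $\varphi(x)\in\DNQ$; if $x(n)=c$ infinitely often then along those blocks $T_{Q,i}(\varphi(x))$ never enters $[0,\tfrac{1}{c+2})$, killing equidistribution. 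This single construction works for \emph{every} basic sequence $Q$, with no case split.

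Your proposed split for $\liminf_i q_i < \infty$ has a genuine gap. Passing to a subsequence $S$ on which $q_i = b$ and ``freezing the other digits harmlessly'' does not work in general: the complement of $S$ can have density $1$ (e.g.\ $q_i = 2$ exactly at $i = 2^k$ and $q_i = i$ otherwise), so the frozen digits are certainly not a density-zero perturbation, while varying only the density-zero set $S$ has no effect whatsoever on membership in $\DNQ$. Even when $S$ has positive density, $T_{Q,n}(x)$ depends on the entire tail $(a_{n+1},a_{n+2},\dots)$, not just on the digits indexed by $S$, so there is no direct way to transport a Ki--Linton base-$b$ construction along $S$ to control the mixed-base orbit $(T_{Q,n}(x))_n$. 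The fix is simply to abandon the case split: your block spoil/repair construction already handles arbitrary $Q$ once you anchor it at a fixed $z \in \DNQ$, which is precisely the paper's move.
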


\begin{thm} \label{NandRN}
The sets $\NQ$ and $\RNQ$ are $\bp^0_3$-complete if $Q$ is $1$-divergent, and clopen if $Q$ is $1$-convergent.
Moreover, $\NkQ$ and $\RNkQ$ are $\bp^0_3$-complete if $Q$ is $k$-divergent and clopen if $Q$ is $k$-convergent.
\end{thm}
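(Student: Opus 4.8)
The plan is to prove \textbf{Theorem~\ref{NandRN}} by first disposing of the easy cases and then establishing membership in $\bP^0_3$ and $\bP^0_3$-hardness separately. The easy part: if $Q$ is $k$-convergent, then by Remark~\ref{equivcond} we have $\NkQ=\RNkQ=\mathbb{R}$, which is clopen, and similarly if $Q$ is $1$-convergent then $\NQ=\RNQ=\mathbb{R}$ (since $1$-convergence forces $k$-convergence for all $k\ge 1$, as $Q_n^{(k)}\le Q_n^{(1)}$ up to a shift). So the content is the $1$-divergent (resp.\ $k$-divergent) case. I would fix $k$ and a $k$-divergent $Q$ and work with $\NkQ$ and $\RNkQ$; the results for $\NQ=\bigcap_k\NkQ$ and $\RNQ$ follow since a countable intersection of $\bP^0_3$ sets is $\bP^0_3$, and hardness is inherited from any single $k$ for which $Q$ is $k$-divergent (and $1$-divergence gives $k$-divergence for $k=1$ at least).

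For membership in $\bP^0_3$: unwinding the definitions, $x\in\NkQ$ iff for every block $B$ of length $k$ with $\lim_n Q_n(B)=\infty$ we have $\lim_n N_n^Q(B,x)/Q_n^{(k)}=1$ (using Remark~\ref{equivcond}, noting $Q_n(B)\to\infty$ is equivalent to $Q_n^{(k)}\to\infty$ among admissible blocks since $\IB$ is eventually $1$ when $q_i\to\infty$, and in general one argues directly). There are only finitely many blocks of length $k$ over any fixed alphabet bound, but the alphabet grows; still, for each fixed tuple $B=(e_1,\dots,e_k)\in\omega^k$ the condition ``$\lim_n Q_n(B)=\infty$'' is a fixed (parameter-free) fact about $Q$, so $\NkQ=\bigcap_{B}\{x: \lim_n N_n^Q(B,x)/Q_n^{(k)}=1\}$ where $B$ ranges over the countably many admissible blocks. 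The digit-reading maps $x\mapsto a_i(x)$ are continuous off a countable set, so $\{x: N_n^Q(B,x)=m\}$ is essentially clopen, $\{x: |N_n^Q(B,x)/Q_n^{(k)}-1|<\varepsilon\}$ is open (a finite union of such), and the $\lim=1$ statement ``$\forall \varepsilon\exists N\forall n\ge N\ (\dots)$'' is $\forall\exists\forall(\text{open})=\bP^0_3$. Intersecting over countably many $B$ keeps us in $\bP^0_3$. The same template handles $\RNkQ$, using the ratio $N_n^Q(B_1,x)/N_n^Q(B_2,x)$ (or equivalently comparing each to $Q_n^{(k)}$), which is again arithmetically $\bP^0_3$ in $x$; and then $\NQ,\RNQ\in\bP^0_3$ by countable intersection.

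For $\bP^0_3$-hardness, the standard strategy (following Ki--Linton and the dynamical generalization in \cite{AJKMSubshifts}, \cite{AJKMDynamics}) is to exhibit a continuous reduction from a known $\bP^0_3$-complete set --- the canonical choice being $P_3=\{z\in 2^{\mathbb{N}\times\mathbb{N}}: \forall m\ \exists^\infty n\ z(m,n)=0\}$ or equivalently the set of $z$ such that every ``column'' has a certain $\bP^0_2$ property --- into $\NkQ$. Concretely one builds a continuous $f:2^{\mathbb{N}\times\mathbb{N}}\to\mathbb{R}$ by prescribing the $Q$-Cantor digits of $f(z)$ in consecutive blocks: on ``good'' stretches one writes down a long initial segment of a fixed $Q$-normal number (which exists by Theorem~\ref{measure}, and can be taken to be explicitly computable) so that the block-frequency ratios are pushed toward $1$, and on ``bad'' stretches, controlled by the coordinates of $z$ that are currently violating the $\bP^0_3$ condition, one inserts long runs of a single digit (say $0$) of length calibrated to the growth of $Q_n^{(k)}$ so that $N_n^Q(0_k,x)/Q_n^{(k)}$ is driven away from $1$ infinitely often exactly when $z\notin P_3$. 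Here $1$-divergence (resp.\ $k$-divergence) is precisely what guarantees $Q_n^{(k)}\to\infty$, so that the ``expected count'' denominator is unbounded and the inserted runs can be made to matter; this is the role of the divergence hypothesis. The same reduction simultaneously witnesses hardness for $\RNkQ$ by comparing the manufactured block $0_k$ against another admissible block of infinite expectation. \textbf{The main obstacle} is the hardness construction: one must interleave the ``normal'' padding and the ``spoiling'' runs so delicately that (i) the spoiling runs are long enough, relative to the erratic growth of $\prod q_j$ and of $Q_n^{(k)}$, to genuinely destroy the limit when $z\notin P_3$, yet (ii) short enough, and rare enough once a column stabilizes, that whenever $z\in P_3$ \emph{all} the limits (over every admissible block $B$, and every order $k'\le k$ simultaneously for the $\NQ$ version) really do converge to $1$ --- and (iii) the resulting map $z\mapsto f(z)$ is continuous and lands in the unit interval with a legitimate $Q$-Cantor expansion (digits $a_i\ne q_i-1$ infinitely often). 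Managing the bookkeeping across all block lengths at once, against an arbitrary $k$-divergent basic sequence whose partial products behave wildly, is the technical heart of the argument; the $\bP^0_3$-membership half is routine arithmetic-complexity counting by comparison.
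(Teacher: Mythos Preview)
Your outline is structurally correct and you rightly flag the hardness construction as the obstacle, but the mechanism you propose---overwrite the reference normal number with runs of $0$ on bad stretches---does not work for arbitrary $1$-divergent $Q$, and this is precisely where the paper's argument is more delicate than a Ki--Linton transplant. The problem is your own point (ii): overwriting an interval of length $L$ with $0$s perturbs the counts $N^Q_n(B,\cdot)$ of \emph{every} good block $B$ by an amount of order $L$, and for general $Q$ the expectations $Q_n(B)$ of different good blocks can grow at incomparable rates. A spoiling interval long enough to shift the frequency of your target block by a definite amount can therefore already overwhelm $Q_n(B')$ for some other good block $B'$ of much smaller expectation, destroying normality for $B'$ even on the good side. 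There is no calibration of $L$ alone that resolves this; the spoiling must be made selective in which blocks it touches.

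The paper achieves this by \emph{removing} a controlled number $\tfrac{1}{x'(n)}Q_{b_n}(B_{i_0})$ of occurrences of a single block $B_{i_0}$, altering one digit at each selected occurrence of $B_{i_0}$ in the reference normal $z$. The block $B_{i_0}$ is not fixed but is chosen at each stage by maximizing, over the first $\lfloor x'(n)^{1/6}\rfloor$ good blocks $B_j$, the position $m(j)$ past which $B_j$ has only an $O(x'(n)^{-1/3})$-fraction of its total expectation remaining; this forces all alterations into a terminal segment $[m,b_n]$ of $I_n$ in which every such $B_j$ is automatically protected. Blocks with very small expectation relative to $Q_{b_n}(B_{i_0})$ (the ``sparse'' ones) are handled separately by excising from the alteration set any position whose change could create or destroy a copy of them. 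This two-tier protection---maximization for non-sparse blocks, explicit avoidance for sparse ones---is the missing idea, and without it your sketch does not yield a proof. Two smaller points: hardness of $\NQ$ is not automatically inherited from hardness of $\NkQ$, since a reduction to $\NkQ$ need not land in $\NQ$ on the good side (so you must control all good blocks at once, which is again (ii)); and your appeal to Remark~\ref{equivcond} presumes $Q$ is infinite in limit, which Theorem~\ref{NandRN} does not assume.
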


We can extend Theorem~\ref{NandRN} to show the following.

\begin{thm} \label{genblocks}
Let $\mathscr{C}$ be a collection of blocks.  Then the set
$$
\mathscr{N}_\mathscr{C}(Q)=\left\{x \in \mathbb{R} \colon \lim_{n \to \infty} \frac {N_n^Q(B,x)} {Q_n(B)} =1\
\forall B\in \mathscr{C} \hbox{ such that } \lim_{n \to \infty} Q_n(B) = \infty \right\}
$$
is $\bp^0_3$-complete if there exists $B \in \mathscr{C}$ such that
$\lim_{n \to \infty} Q_n(B) = \infty$ and clopen otherwise.  Similarly, the set

\begin{equation*}
\begin{split}
\mathscr{RN}_\mathscr{C}(Q)=& \left\{ x \in \mathbb{R}  \colon \lim_{n \to \infty}
\frac {N_n^Q(B_1,x)/Q_n(B_1)} {N_n^Q(B_2,x)/Q_n(B_2)}=1 \ \forall B_1,B_2\in \mathscr{C} \hbox{ such that } \right.
\\ & \qquad 
 |B_1|=|B_2| \hbox{ and }  \left. \lim_{n \to \infty} \min(Q_n(B_1),Q_n(B_2)) =
\vphantom{\frac {N_n^Q(B_1,x)/Q_n(B_1)} {N_n^Q(B_2,x)/Q_n(B_2)}}
\infty \right\}
\end{split}
\end{equation*}
is $\bp^0_3$-complete if there exist $B_1, B_2 \in \mathscr{C}$ such that
$\lim_{n \to \infty} \min(Q_n(B_1),Q_n(B_2)) = \infty$
and $\mathscr{C}$ satisfies the following hypothesis:

$(\star)$ For every $B\in \mathscr{C}$ with infinite expectation there is a block
$B' \in \mathscr{C}$ of infinite expectation with $|B'|=|B|$ and an integer $1 \leq t \leq |B|$
such that $|B(t)-B'(t')|>1$ for all $1 \leq t' \leq |B'|$.

If there do not exist $B_1, B_2 \in \mathscr{C}$ such that
$\lim_{n \to \infty} \min(Q_n(B_1),Q_n(B_2)) = \infty$,
then $\mathscr{RN}_\mathscr{C}$ is clopen.
\end{thm}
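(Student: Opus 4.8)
The plan is to reduce to Theorem~\ref{NandRN}, which is essentially the case $\mathscr{C}=\omega^{<\omega}$ but applied one length at a time. For the membership (upper bound) claims, the arguments are exactly as for $\NQ$, $\RNQ$: the condition defining $\mathscr{N}_\mathscr{C}(Q)$ is a countable intersection (over $B\in\mathscr{C}$ with infinite expectation) of conditions of the form $\lim_n N_n^Q(B,x)/Q_n(B)=1$, and each such condition is $\bP^0_3$ (it is a $\forall\epsilon\,\exists N\,\forall n\geq N$ statement whose innermost part is clopen in $x$, since $N_n^Q(B,x)$ depends only on finitely many digits). A countable intersection of $\bP^0_3$ sets is $\bP^0_3$, so $\mathscr{N}_\mathscr{C}(Q)\in\bP^0_3$; likewise for $\mathscr{RN}_\mathscr{C}(Q)$, ranging over pairs $B_1,B_2\in\mathscr{C}$ of equal length with $\lim_n\min(Q_n(B_1),Q_n(B_2))=\infty$. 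When no block of $\mathscr{C}$ has infinite expectation (resp.\ no such pair exists), the defining condition is vacuous, so the set is all of $\mathbb{R}$, which is clopen; this handles the "clopen otherwise" clauses. So the content is the $\bP^0_3$-hardness.

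For $\mathscr{N}_\mathscr{C}(Q)$-hardness: fix $B_0\in\mathscr{C}$ with $\lim_n Q_n(B_0)=\infty$, say $|B_0|=k$. I would re-run the hardness construction from the proof of Theorem~\ref{NandRN} (the $\NkQ$ case) but using the single block $B_0$ in place of "all blocks of length $k$". That construction encodes a $\bP^0_3$-complete set — e.g.\ $\{z\in 2^{\omega\times\omega}: \forall m\,\exists^\infty n\, z(m,n)=1\}$ or the standard $P_3$ set — by building, on disjoint blocks of coordinates of $x$, digit patterns that force $N_n^Q(B_0,x)/Q_n(B_0)$ either to converge to $1$ or to oscillate, according to the bit being coded. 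The key point carried over from Theorem~\ref{NandRN} is that when $Q$ is infinite in limit and $k$-divergent one can, on any sufficiently long stretch of indices, place or avoid occurrences of a chosen length-$k$ block essentially freely while controlling the count against $Q_n^{(k)}$; by Remark~\ref{equivcond} the normalization can be taken to be $Q_n^{(k)}$. One must check $1$-divergence (or $k$-divergence) is exactly the hypothesis that makes $\lim_n Q_n(B_0)=\infty$ possible and the construction non-vacuous — and in fact $\lim_n Q_n(B_0)=\infty$ together with $|B_0|=k$ forces $Q$ to be $k$-divergent, which is what we need. Hardness for general length $k$ (not just $k=1$) is already in Theorem~\ref{NandRN}, so this is a genuine "restrict to one block" adaptation, not a new idea.

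For $\mathscr{RN}_\mathscr{C}(Q)$-hardness: here is where hypothesis $(\star)$ enters, and this is the main obstacle. Ratio normality compares two blocks, and the construction that makes $N_n^Q(B_1,x)/N_n^Q(B_2,x)$ oscillate needs to boost occurrences of $B_1$ while holding those of $B_2$ down (and vice versa) on alternating stretches. The difficulty is that forcing an occurrence of $B_1$ at position $i$ constrains digits $a_i,\dots,a_{i+k-1}$, and these overlapping constraints can accidentally create occurrences of $B_2$; one needs $B_1$ and $B_2$ "separated enough" that a stretch densely packed with $B_1$'s contains no $B_2$'s at all. Condition $(\star)$ — that for the target block $B$ there is a partner $B'$ of the same length and a coordinate $t$ where $B(t)$ differs by more than $1$ from every entry of $B'$ — is precisely what guarantees this: using $B$ and $B'$ as the two blocks, one can build long runs consisting of $B$ repeated (or $B'$ repeated) where, because some digit value used in the $B$-run is $\geq 2$ away from all digit values of $B'$, no occurrence of $B'$ can overlap the run, and conversely. (The "differ by more than $1$" rather than "differ" accommodates the need to also perturb digits slightly without creating spurious matches, exactly as in the corresponding step of Theorem~\ref{NandRN}.) So given an arbitrary valid pair from $\mathscr{C}$, I would first use $(\star)$ to pass to a "good" pair $B,B'$, then encode a $\bP^0_3$-complete set by alternately flooding with $B$ and with $B'$ on the coded stretches, using infinite-in-limit and $k$-divergence to keep the $Q_n$-normalizations growing and the counts controllable. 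The bookkeeping — verifying that occurrences outside the controlled stretches are negligible relative to $Q_n^{(k)}$, and that the ratio genuinely oscillates rather than converging — is the routine-but-delicate part, and it is where one leans hardest on the machinery already developed for Theorem~\ref{NandRN}.
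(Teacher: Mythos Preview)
Your upper-bound and clopen arguments are fine, but the hardness sketches have a real gap in each half.

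For $\mathscr{N}_\mathscr{C}(Q)$: fixing a \emph{single} block $B_0$ and running the Theorem~\ref{NandRN} construction with $B_{i_0}\equiv B_0$ does not work in general. When $x\in P$ you must land in $\mathscr{N}_\mathscr{C}(Q)$, i.e.\ preserve normality for \emph{every} $B\in\mathscr{C}$ of infinite expectation, not just $B_0$. The construction makes roughly $\frac{1}{x'(n)}Q_{b_n}(B_0)$ digit changes in $I_n$, and each change can create or destroy up to $|B|$ occurrences of any other block $B$. If $\mathscr{C}$ contains some $B$ with $Q_{b_n}(B)/Q_{b_n}(B_0)\to 0$ (which is not excluded), these changes are not $o(Q_m(B))$ and you lose normality for $B$. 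This is exactly why the paper's proof of Theorem~\ref{NandRN} does \emph{not} fix one block: at stage $n$ it enumerates the first $\lfloor\sqrt[6]{x'(n)}\rfloor$ blocks of $\mathscr{C}$, chooses $B_{i_0}$ dynamically to maximize $m(i)$, and uses the sparse/non-sparse split together with the set $A'$ to guarantee the changes are negligible for every listed block. That mechanism, not a single-block shortcut, is what the paper carries over verbatim to $\mathscr{N}_\mathscr{C}(Q)$.

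For $\mathscr{RN}_\mathscr{C}(Q)$: your reading of $(\star)$ is not the one that makes the proof go through. The paper does not ``flood'' with $B$-runs or $B'$-runs; it again starts from a normal $z$ and perturbs. The role of $(\star)$ is this: having identified $B_{i_0}\in\mathscr{C}$ (varying with $n$ as above), one takes the partner $B'\in\mathscr{C}$ and the index $t$ from $(\star)$ and uses as digit-changing function the map that alters the $t$th digit of each targeted occurrence of $B_{i_0}$ by $\pm 1$. Since $|B_{i_0}(t)-B'(t')|>1$ for every $t'$, neither $B_{i_0}(t)$ nor $B_{i_0}(t)\pm 1$ is a digit of $B'$, so this operation cannot create or destroy any occurrence of $B'$. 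Hence when $x\notin P$ the count of $B_{i_0}$ is depressed (Equation~\ref{eqns1}) while the count of $B'$ remains at its correct frequency, which kills ratio normality within $\mathscr{C}$; when $x\in P$ the full-normality argument goes through unchanged. Your ``overlap-free runs'' interpretation is neither what $(\star)$ literally asserts nor how it is used, and a flooding construction would in any case wreck ratio normality for other pairs in $\mathscr{C}$ when $x\in P$. Also note that Theorem~\ref{genblocks} does not assume $Q$ is infinite in limit, so you cannot invoke Remark~\ref{equivcond}; the paper's argument from Theorem~\ref{NandRN} avoids that assumption.
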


\begin{remark}
The $\bp^0_3$-completeness of $\mathscr{N}_{\mathscr{C}}$
holds for general $\mathscr{C}$ (and all $Q$),
but the $\bp^0_3$-completeness of $\mathscr{RN}_{\mathscr{C}}$ requires the
extra hypothesis $(\star)$ on $\mathscr{C}$. We do not know if this extra hypothesis is necessary.
For base $b$ expansions, normality and ratio normality coincide so the extra assumption
is not needed, but we do not know for general $Q$.
\end{remark}

\begin{remark}
The proof of Theorem~\ref{genblocks} will also show the $\bp^0_3$-completeness for
a variation of ratio normality which we call {\em strong ratio normality}. Here
we remove the restriction that $|B_1|=|B_2|$ in the above definition of
$\mathscr{RN}_\mathscr{C}(Q)$. We can accordingly relax the $(\star)$ assumption
by removing the requirement that $|B'|=|B|$.
\end{remark}

The next theorem addresses the complexity of the difference sets. We note that the hypotheses on $Q$
are necessary as the various normality classes coincide for base $b$ expansions (where $Q$
is not infinite in limit) and when $Q$ is not $1$-divergent then $\NQ$ and $\RNQ$ are clopen.

\begin{thm}\label{differences}
Assume that $Q$ is infinite in limit  and $1$-divergent. Then the sets
$\DNQ\sm \NQ$, $\DNQ\sm \RNQ$, $\NQ\sm \DNQ$,  $\RNQ\sm \DNQ$, and $\RNQ\sm \NQ$ are all $\dt$-complete.
\end{thm}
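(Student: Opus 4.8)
The plan is to combine the $\bP^0_3$-completeness results for the individual classes (Theorems~\ref{thm:DNQ} and \ref{NandRN}) with a ``superposition'' construction that encodes two independent $\bP^0_3$-hard problems into disjoint coordinate blocks of a single Cantor series expansion. First I would verify membership: since each of $\NQ$, $\RNQ$, $\DNQ$ is $\bP^0_3$ (being a countable intersection over block lengths $k$ of $\bP^0_3$ sets, or directly from the definitions as a $\forall \varepsilon\,\exists N\,\forall n$ statement about a continuous quantity), any difference $A\sm B$ with $A,B\in\bP^0_3$ is visibly in $\dt=D_2(\bP^0_3)$. So the entire content is $\dt$-hardness of the five sets.

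For $\dt$-hardness one must, given a pair $(P,R)$ of $\bP^0_3$ sets in $\ww$ (or in $2^\omega$), produce a continuous reduction $f$ such that $x\in P\sm R \iff f(x)\in A\sm B$ for the relevant difference set $A\sm B$. The natural route is to reduce from the known $\dt$-complete set, which the paper promises to fix in \S\ref{section:lastcase}; that set has the form $\{\alpha : \alpha\in U \wedge \alpha\notin V\}$ for a universal pair of $\bP^0_3$ sets. I would split the digit positions of the target Cantor series expansion into two interleaved infinite subsequences $S_0, S_1$ (say, even and odd blocks of a suitable length), arranged so that on $S_0$ we run the Ki--Linton-style reduction witnessing $\bP^0_3$-hardness of (the $Q\restriction S_0$ version of) $\NQ$ or $\RNQ$, and on $S_1$ we run the reduction witnessing $\bP^0_3$-hardness of $\DNQ$, or vice versa. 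The key point is that $1$-divergence of $Q$ and the assumption $\lim_i q_i=\infty$ let me choose the subsequences so that \emph{each} piece of $Q$ is still $1$-divergent and infinite in limit; moreover, because $Q$ is infinite in limit, Remark~\ref{rdn} shows $\DNQ$ depends only on the sequence $(a_i/q_i)$ and is insensitive to changes on a density-zero set, while the block-counting normality conditions are likewise governed by the full-measure asymptotics $Q_n^{(k)}$ (Remark~\ref{equivcond}). This asymptotic robustness is exactly what makes superposition work: the behavior on $S_1$ contributes negligibly to the block frequencies relevant to $\NQ,\RNQ$, and the behavior on $S_0$ contributes negligibly to the uniform distribution of $(T_{Q,n}(x))$, so the two coded conditions become genuinely independent.

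Concretely, for $\DNQ\sm\NQ$-completeness I would arrange that membership in $\DNQ$ is controlled entirely by the $S_1$-coordinates (forcing $(a_i/q_i)_{i\in S_1}$ to be u.d. or not according to whether the first input predicate holds) while simultaneously keeping the $S_0$-coordinates ``distribution-normal by default'' (e.g. by writing a fixed normal-like pattern there that is automatically u.d.\ and of density zero among all coordinates so it does not interfere), and membership in $\NQ$ is controlled by the block statistics on $S_0$ via the Theorem~\ref{NandRN} reduction, while the $S_1$-coordinates are forced to be ``$Q$-normal by default'' or simply contribute a vanishing proportion to every $Q_n(B)$. Then $f(x)\in\DNQ \iff$ first predicate, and $f(x)\in\NQ \iff \neg(\text{second predicate})$; choosing the two predicates to be the components of the $\dt$-universal set gives the reduction. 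The remaining four difference sets are handled by the same template with the roles and the ``default'' behaviors permuted — for $\RNQ\sm\NQ$ one additionally uses hypothesis-free versions of the Theorem~\ref{NandRN}/\ref{genblocks} machinery to separately control ratio-normality and normality on two further subsequences, exploiting that one can make $x$ ratio normal but not normal by inflating all block counts by a common factor, which is exactly the phenomenon behind $\RNQ\sm\NQ\neq\emptyset$.

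The main obstacle I expect is the \emph{simultaneous} control: the reductions from Theorems~\ref{thm:DNQ} and \ref{NandRN} each prescribe the digits $a_i$ on their own subsequence as a function of the input, but I must check that the ``filler'' digits placed on the complementary subsequence genuinely do not perturb the other normality notion — i.e., that the filler is u.d.\ with density-one support for the $\DNQ$ side and has the correct limiting block frequencies (or negligible density) for the $\NQ$/$\RNQ$ side — and that this remains true \emph{uniformly in the input}, so that the reduction is continuous and the biconditionals hold on the nose rather than merely generically. Getting the bookkeeping right so that the ``expected'' quantities $Q_n(B)$ and $Q_n^{(k)}$ for the restricted sub-basic-sequences still tend to infinity (needed so the normality conditions are non-trivial, per Remark~\ref{equivcond}) while the cross-contamination terms are $o(1)$ is the delicate quantitative heart of the argument; the rest is a routine, if lengthy, assembly of the already-established single-class reductions.
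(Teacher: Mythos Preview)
Your high-level strategy---encode two independent $\bP^0_3$ conditions and show they can be read off separately---is correct, but the mechanism you propose (a subsequence decomposition $S_0\cup S_1$ with the Theorem~\ref{thm:DNQ} and Theorem~\ref{NandRN} reductions run verbatim on the two pieces) does not work, and the paper's proof is organized quite differently.

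The gap is in your claim that ``the behavior on $S_1$ contributes negligibly to the block frequencies relevant to $\NQ,\RNQ$, and the behavior on $S_0$ contributes negligibly to the uniform distribution.'' With interleaved blocks of growing length, at the right endpoint of each $S_1$-block that block dominates \emph{both} the count $N^Q_n(B,\cdot)$ and the equidistribution count, and symmetrically for $S_0$-blocks; you cannot have both pieces negligible for the other's statistic at all scales. Worse, the reduction of Theorem~\ref{thm:DNQ}, when it forces $\varphi(x)\notin\DNQ$, replaces a positive-density set of digits $z_i$ by values roughly $q_i/c$, which alters block counts drastically; so on $S_1$ you would destroy $Q$-normality as a side effect exactly in the case where you want $\varphi(x)\in\NQ\sm\DNQ$. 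The individual reductions are not neutral with respect to the other normality class, and the subsequence split does not make them so.

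The paper sidesteps this by applying two \emph{commuting, orthogonal} operations to a single reference point $z\in\NQ\cap\DNQ$ on each interval $I_n$. The operation $\Theta_{k,\ell}$ changes selected digits by exactly $1$; since $q_i\to\infty$, Remark~\ref{rdn} gives that this never affects $\DNQ$. The operation $\Xi_k$ alters only digits in $[\frac{k}{k+1}q_j,q_j)$; since $q_i\to\infty$, for any fixed block $B$ only boundedly many positions are touched, so this never affects $\NQ$ or $\RNQ$. One then drives $\Theta_{x'(2n),x'(2n)}$ with the even input coordinates and $\Xi_{x'(2n+1)}$ with the odd ones; the orthogonality gives $x(2n)\to\infty\iff\varphi(x)\in\NQ$ (and $\notin\RNQ$ otherwise) and $x(2n+1)\to\infty\iff\varphi(x)\in\DNQ$, yielding all four of $\DNQ\sm\NQ$, $\DNQ\sm\RNQ$, $\NQ\sm\DNQ$, $\RNQ\sm\DNQ$ at once. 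The essential idea you are missing is that orthogonality is engineered at the level of the \emph{digit operations} (small additive change versus change only at large digits), not by spatial separation of coordinates.

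For $\RNQ\sm\NQ$ your one-line ``inflate all block counts by a common factor'' is the right intuition but far from a proof. The paper realizes it via an auxiliary family of basic sequences $P_m=(\lfloor\frac{m+1}{m+2}q_i\rfloor)$ and normal points $w_m\in\mathscr{N}(P_m)$: concatenating pieces of the $w_{x'(2n+1)}$ along fast-growing blocks produces a $y$ that is $P_x$-normal, hence $Q$-ratio normal (same block ratios), but $Q$-normal iff $x(2n+1)\to\infty$ (since only then $\QNK ki/(P_x)^{(k)}_i\to 1$); a second $\Theta$-type operation driven by $x(2n)$ then independently toggles $\RNQ$. This requires a concatenation lemma (Lemma~\ref{lem:t2}) and a quantitative comparison of $Q^{(k)}$ and $(P_x)^{(k)}$ (Claim~\ref{tc}); neither is visible in your sketch, and this case is genuinely harder than the other four.
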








Theorem~\ref{differences} imposes limitations on the relationships between
the classes $\NQ$, $\RNQ$, and $\DNQ$. For example, consider the sets $\NQ$ and $\RNQ$.
Since $\RNQ \sm \NQ$ is $D_2(\bP^0_3)$-complete, there cannot be a $\bS^0_3$ set
$A$ such that $A\cap \RNQ=\NQ$ (as otherwise we would have $\RNQ\sm \NQ=\RNQ\sm A \in \bP^0_3$,
a contradiction). Thus, no $\bS^0_3$ condition can be added to the assumption of
ratio normality to give the set of normal numbers.
Equivalently, anytime a $\bS^0_3$ set contains $\NQ$ (or $\DNQ$), then it must
contain elements of $\RNQ\sm \NQ$ and $\DNQ \sm \NQ$ (resp.\ $\NQ\sm \DNQ$, and so $\RNQ\sm \DNQ$). 
For example, though $\NQ$ has Lebesgue measure
one, any $\bP^0_2$ set of measure one which contains $\NQ$ must contain an element
of $\RNQ\sm \NQ$, as well as $\DNQ\sm \NQ$. Many naturally occurring sets of reals $A$ are defined by conditions
which result in them being $\bS^0_3$ sets.
Examples include countable sets, co-countable sets, the class BA of {\em badly approximable}
numbers (which is a $\bS^0_2$ set), the Liouville numbers (which is a $\bP^0_2$ set),
and the set of $x\in [0,1]$ where a particular continuous function $f\colon [0,1]\to \R$
is not differentiable. 
In all these cases
the theorem implies that either the set omits some normal number, or else contains
a number which is ratio normal but not normal (and likewise for $\DNQ$). Of course, many of these statements
are easy to see directly, but the point is that they all follow immediately from the general
complexity result, Theorem~\ref{differences}.

Previous work of Mance \cite{Mance} had shown that all of the non-trivial difference sets
(all except $\NQ\sm \RNQ$, which is trivially empty) are non-empty assuming $Q$ is infinite in limit
$k$-divergent for all $k$. Thus, Theorem~\ref{differences} strengthens this
in two ways: we relax the hypothesis to $Q$ being $1$-divergent, and we show the
difference sets are actually $D_2(\bP^0_3)$-complete.

To mention another  application of Theorem~\ref{differences}, consider
(relative to a fixed basic sequence $Q$ which is infinite in limit and $1$-divergent)
the following weakening of distribution normality. Say a real $x$ is $\epsilon$-{\em weakly distribution normal}
(for $\epsilon >0$) if there is an $\ell$ such that for all $N\geq \ell$ and all
intervals $(a,b) \subseteq (0,1)$

\[
\left| \frac{1}{N} \# \{ n <N\colon q_0 \cdots q_{n-1} x \mod 1 \in [a,b]\} -(b-a)\right| \leq \epsilon.
\]
The set $\WDN_\epsilon$ of $\epsilon$-weakly distributional normal numbers is easily a $\bS^0_2$ set.
It therefore follows from Theorem~\ref{differences} that
$\NQ\cap \WDN_\epsilon \sm \DNQ$ is non-empty.

For one more example, let $\ri(Q)$ denote the set of {\em rich} numbers. These are the $x$ such that
every block $B \in \omega^{<\omega}$ occurs in the $Q$-ary expansion of $x$. The set $\ri(Q)$
is easily a $\bP^0_2$ set, and contains the ratio normal numbers (and so also the normal numbers).
From Theorem~\ref{differences} it therefore follows that $\DNQ\cap \ri(Q) \sm \NQ$ is
non-empty, 
in other words, distribution normal and rich does not imply normal (if this failed, then
$\DNQ\sm \NQ$ would be equal to $\DNQ\sm \ri(Q)$ which a $\bP^0_3$ set, contradicting
Theorem~\ref{differences}).

\section{$\bP^0_3$-completeness of the normality classes}

Throughout, $\vq=(q_i)_{i=1}^\infty$ will denote a {\em basic sequence}, that is, a
sequence of integers $q_i$ with $q_i \geq 2$
for all $i$.

Note that the $\bp^0_3$-completeness of the set $\NQ \cap [0,1]$ (and likewise for
$\RNQ\cap [0,1]$ and $\DNQ \cap [0,1]$) immediately implies the
$\bp^0_3$-completeness of $\NQ$, since if $\NQ$ were in $\bs^0_3$, then so would be
$\NQ\cap [0,1]$. 
Similarly, the $\bp^0_3$-completeness of the difference sets
restricted to $[0,1]$ (for example $\RNQ \sm \NQ \cap [0,1]$)
implies the $\bp^0_3$-completeness of the difference set (e.g.\ $\RNQ\sm \NQ$).
So, for the rest of the paper we will restrict our attention to reals in $[0,1]$, that is, 
when we write $\NQ$ etc., we will henceforth mean $\NQ\cap [0,1]$.

The basic sequence $Q$ forms the set of bases for the expansion of an real $x \in [0,1]$
into a Cantor series 
\[
x= \sum_{i=1}^\infty \frac{a_i}{q_1 \cdots q_i},
\]
where $0 \leq a_i < q_i$.
Recall we abbreviate the above equation by writing $x=.a_1 a_2\cdots$ when $Q$ is understood.
Let $X_\vq$ be the set of all sequences $(a_i)_{i=1}^{\infty}$ with 
$0 \leq a_i <q_i$. 
$X_\vq$ is a compact Polish 
space with the product of the discrete topologies on the sets $\{ 0,1,\dots,q_i-1\}$.
We let $\varphi_2 \colon X_Q \to [0,1]$ be the map $\varphi_2((a_i))=.a_1a_2\dots$.
Our reduction maps will always be of the form $\varphi(x)=\varphi_2\circ \varphi_1$,
where $\varphi_1\colon \ww \to X_Q$ will vary from proof to proof.





We first prove the completeness result for distribution normality, Theorem~\ref{thm:DNQ}.

\begin{proof}[Proof of Theorem~\ref{thm:DNQ}]
Let $P=\{ x \in \ww  \colon \lim_n x(n)=\infty\}$. 
It is well-known that $P$ is $\bp^0_3$-complete. 
We define a continuous $\varphi \colon \ww  \to [0,1]$ which will be a reduction of 
$P$ to $\DNQ$, that is, such that $P= \varphi^{-1}(\DNQ)$. This suffices to show that 
$\DNQ$ is $\bp^0_3$-complete. Again, $\varphi$ will be of the form $\varphi=\varphi_2\circ \varphi_1$
where $\varphi_2$ is as above.

Let $0=b_0<b_1<b_2<\cdots$ be a sufficiently fast-growing sequence from $\N$,
so that in particular $\lim_n \frac{b_0+\cdots +b_{n-1}}{b_n}=0$. Let $I_n=[b_{n-1},b_n)$, which we call 
the $n$th block of $\N$.

Fix a $z=(z_i)_{i=1}^\infty \in X_Q$ such that $\varphi_2(z)=.z_1 z_2\cdots \in \DNQ$.
We will use $z$ as a ``reference point'' from which we make certain modifications depending
on the point $x \in \ww$.

Fix $x \in \ww$ and we describe the construction for $\varphi_1(x)$.
Let $x'(n)=\min \{ x(n),n\}$. Clearly $x \in P$ iff $x'\in P$. 
Consider the $n$th block $I_n$. 
We may assume the $b_n$ grow fast enough so that for all $n$ and all $k \leq n+2$, 
for all $b \geq b_{n-1}$ we have that 
\begin{equation}
\left| \frac{1}{b} \# \left\{ i <b \colon \tqi z \in \left[0,\frac{1}{k}\right)
\right\}  -\frac{1}{k}\right| < \frac{1}{2n}.
\end{equation}

For $i \in I_n$ we define $a_i= (\varphi_1(x))(i)$ as follows. 
For $i \in I_n$, if $\tqi z \notin [0, \frac{1}{x'(n)+2})$,
then we set $a_i=z_i$. If $\tqi z \in [0, \frac{1}{x'(n)+2})$, then 
we set $a_i$ to be the least integer in $[z_i, q_i)$ such that 
$\frac{a_i}{q_i}> \frac{1}{x'(n)+2}$. 

This defines the map $\varphi_1$, and it is clear that $\varphi_1$, and thus $\varphi$, is continuous. 
We show $\varphi$ is a reduction of $P$ to $\DNQ$.

First suppose that $x \notin P$. Then there is an $i_0$ such that for infinitely many $n$
we have $x'(n)=i_0$.  For such $n$ we have that for all $i \in I_n$ that 
$\tqi {\varphi(x)} \notin [0,\frac{1}{x'(n)+2})$, and thus 
$\tqi {\varphi(x)} \notin [0,\frac{1}{i_0+2})$. This follows from the facts that 
\[ \tqi {\varphi(x)} \geq \tqi{z},\] and if $\tqi{z}\in  [0,\frac{1}{x'(n)+2})$
then by definition of $a_i$ we have that $\tqi{\varphi(x)} \geq \frac{a_i}{q_i}
> \frac{1}{x'(n)+2}$. Since $\frac{b_0+\cdots +b_{n-1}}{b_n}$ tends to $0$,
it follows that $\varphi(x)\notin \DNQ$.

Assume now $x \in P$, and we show that $\varphi(x) \in \DNQ$. 
From Remark~\ref{rdn} it suffices to show that $\{ i \colon a_i \neq z_i\}$ has density $0$.
Fix $\epsilon >0$. Since $x \in P$, $\lim_{n \to \infty} x'(n)=\infty$. Fix $n_0$ 
large enough so that $\frac{1}{x'(n)+2} <\frac{\epsilon}{2}$ for all $n \geq n_0$.
If $n \geq n_0$, then for all $k \in I_n$ 
we have that 
\begin{equation} \label{eq:t2}
\left|\frac{1}{k} \# \left\{ i <k \colon \tqi{z} \in \left[0, \frac{1}{x'(n_0)+2}\right) 
\right\} -\frac{1}{x'(n_0)+2}  \right| < \frac{1}{2n} \leq \frac{1}{x'(n_0)+2}<\frac{\epsilon}{2}.
\end{equation}

For $n \geq n_0$, the $i \in I_n$ for which $a_i \neq z_i$ are the $i$ 
for which $\tqi{z} \in [0, \frac{1}{x'(n)+2})$, which is a subset of the $i \in I_n$
for which $\tqi{z} \in [0, \frac{1}{x'(n_0)+2})$, for $n$ large enough. From Equation~(\ref{eq:t2})
it follows that for all large enough $n$ and $k >b_n$ that 
\[
\frac{1}{k} \| \{i <k \colon a_i \neq z_i \} \|
< \epsilon.
\]
Thus,  $\{ i \colon a_i \neq z_i\}$ has density $0$.
\end{proof}

We next prove the completeness result, Theorem~\ref{NandRN}, for the classes
$\NQ$ and $\RNQ$.


\begin{proof}[Proof of Theorem~\ref{NandRN}]
Suppose $\vq$ is $1$-divergent and we first show that $\NQ$ and $\RNQ$ are $\bp^0_3$-complete.
Let $P\subseteq \ww$ be the $\bp^0_3$-complete set as before.
Fix $z =(z_i)_{i=1}^\infty \in X_Q$ such that $\varphi_2(z)=.z_1z_2,\dots \in \NQ$.



We say a block $B \in \omega^k$ is {\em good} if $\lim_{n} Q_n(B)=\infty$,
that is, the block $B$ has an infinite expectation. Recall that if  $B=(e_1,\dots,e_k)$,
then $\|B\|=\sum_{1\leq i \leq k } (e_i+1)$.

We let $0=b_0<b_1<\cdots$ be a sufficiently fast-growing sequence
so that the following hold:
\begin{enumerate}
\item \label{bp1}
$b_n-b_{n-1}> 2^n b_{n-1}$.

\item \label{bp2}
$\frac{1}{Q_m(B)} |N^Q_{m}(B,z)-Q_m(B)|<\frac{1}{n}$
for any good $B$ 
with $\|B\|\leq n$, and any $m \geq b_{n-1}$.
\item \label{bp3}
$\frac{b_{n-1}}{Q_{b_n}(B)} < \frac{1}{4^n}$ for any good $B$ with $\|B\|\leq n$.
\end{enumerate}

We define the map $\varphi_1 \colon \ww\to X_Q$, 
and our final reduction map will be $\varphi=\varphi_2\circ \varphi_1$
where $\varphi_2$ is as in the proof of Theorem~\ref{thm:DNQ}.
Let $I_n=[b_{n-1},b_n)$. For $x \in \ww$, let $x'(n)=\max \{ 27, \min \{ x(n),n\} \}$.
We define $\varphi_1(x)\res I_n$ as follows.

Let $B_0,B_1,\dots B_p$ enumerate the good blocks which occur among the
first $\lfloor \sqrt[6]{x'(n)} \rfloor$
many blocks, where we order the blocks first by $\| B\|$ and then lexicographically.
Note that this ordering of the blocks has order-type $\omega$ and the $i$th block in
this ordering has length $\leq i$. So, for $j \leq p$ we have $|B_j| \leq \sqrt[6]{x'(n)}
\leq \sqrt[6]{n}$.

For each $0\leq i \leq p$ let $m(i) \in [b_{n-1},b_n)$ be the least $m$ so that
$N_{m,b_n}(B_i,z) \leq \frac{2}{\sqrt[3]{x'(n)}} Q_{b_n}(B_i)$. Note that
$N_{m(i),b_n}(B_i,z)\geq \frac{3}{2 \sqrt[3]{x'(n)}} Q_{b_n}(B_i)$. 
Let $m=\max \{ m(i)\colon 0 \leq i \leq p\}$. Let $i_0$ be such that
$m(i_0)=m$. We note that $i_0=i_0(n)$ depends on $n$, but as $n$ is fixed
for the rest of the definition of $\varphi_1(x)\res I_n$, we will just write $i_0$.

Consider the block $B_{i_0}$. We say a block $B_j$ is {\em sparse}
relative to $B_{i_0}$ if $Q_{b_n}(B_j)< \frac{1}{4 |B_j|4^{|B_j|} \sqrt{x'(n)}} Q_{b_n}(B_{i_0})$.
Let $A\subseteq I_n$ be the set of $i \in [m,b_n]$ such that $z\res [i, i+|B_{i_0}|-1]=B_{i_0}$.
Let $r$ be the digit altering function defined as follows. If $B_{i_0}(1)=0$,
then $r$ maps  $0$ to $1$ and leaves all other values fixed. If $B_{i_0}(1)\neq 0$
then $r$ maps  $B_{i_0}(1)$ to $B_{i_0}(1)-1$ and leaves all other values fixed.
Since the block $B_{i_0}$ is fixed for the rest of the definition, the function $r$
is also. Note that $r$ applied to a valid $Q$ expansion results in a valid $Q$ expansion. 
Also, $r$ is at most $2$-to-$1$, so each block $B$ has most $2^{|B|}$ many preimages
under $r$ (we apply $r$ to a block by applying it to each digit).

Let $A' \subseteq A$ be those $i \in A$ such that for all $j<p$
such that $B_j$ is sparse relative to $B_{i_0}$, 
and all $q<|B_j|$,
$z \res[ i-q,\dots,i-q+|B_j|-1]\neq B_j$ and
$z \res[ i-q,\dots,i-q+|B_j|-1]\notin B^S_j$, where $S\subseteq |B_j|$ and
$B^S_j$ is the set of blocks $B$ such that $r(B)=B_j$ (here $r(B)$
means apply $r$ to all of the digits of $B$).

Note that if $z'$ is obtained by applying $r$ to the digits  $z_i$ for
$i$ in a subset of $A'$, then $z\res I_n$ and $z'\res I_n$ have the same number
of occurrences of the block $B_j$ for $j=1,\dots,p$.


Note that 
\[
|A'|\geq \frac{3}{2\sqrt[3] {x'(n)}}Q_{b_n}(B_{i_0}) 
-\sum_{j<p} \sum_{S\subseteq |B_j|} \sum_{r_S(B)=B_j} |B_j|  N^Q_{m,b_n}(B,z)
\]
where the first sum ranges over the $j$ such that $B_j$ is sparse relative to $B_{i_0}$,
and the second sum ranges over blocks $B$ with $r_S(B)=B_j$, where $r_S$
applies $r$ to the digits in the set $S$. 
Since $r$ either lowers
a digit or changes a $0$ to a $1$, it follows that if $r_S(B)=B_j$ then
$Q_k(B)\leq Q_k(B_j)$ for any $k$.

Thus,
\begin{equation*}
\begin{split}
|A'_n|& \geq \frac{3}{2\sqrt[3] {x'(n)}}Q_{b_n}(B_{i_0}) -
\sum_{j<p} |B_j| 4^{|B_j|} N^Q_{b_n}(B_j)
\\ &
\geq \frac{3}{2\sqrt[3] {x'(n)}}Q_{b_n}(B_{i_0}) -
(1+\frac{1}{n})\sum_{j<p} |B_j| 4^{|B_j|} Q_{b_n}(B_j)
\\ &
\geq 
\frac{3}{2\sqrt[3] {x'(n)}}Q_{b_n}(B_{i_0}) -
(1+\frac{1}{n})\sum_{j<p} |B_j| 4^{|B_j|} \left(\frac{1}{4 |B_j|4^{|B_j|} \sqrt{x'(n)}} Q_{b_n}(B_{i_0})\right)
\\ &
\geq 
\frac{3}{2\sqrt[3] {x'(n)}}Q_{b_n}(B_{i_0}) - \frac{1}{2} \frac{\sqrt[6]{x'(n)}}{\sqrt{x'(n)}} Q_{b_n}(B_{i_0})
\\ &
=
\frac{3}{2\sqrt[3] {x'(n)}}Q_{b_n}(B_{i_0}) - \frac{1}{2 \sqrt[3]{x'(n)}} Q_{b_n}(B_{i_0})
\\ &
\geq \frac{1}{\sqrt[3]{x'(n)}} Q_{b_n}(B_{i_0})
\end{split}
\end{equation*}
for all large enough $n$. Let $A''$ be the last $\frac{1}{x'(n)} Q_{b_n}(B_{i_0})$
elements of $A'$. 

For $i \in I_n$, let 
\begin{equation*}
\varphi_1(x)(i)=\begin{cases}
r(z_i) &\text{if } i \in A''\\ 
z_i &\text{otherwise}
\end{cases}
\end{equation*}
Note that $\varphi_1(x)\res I_n$ is obtained from $z \res I_n$ by changing certain digits,
the number of such changes being $|A''|=\frac{1}{x'(n)} Q_{b_n}(B_{i_0})$.
Note that the least element $i$ of $A''$ is at least the number $m$ from above
(since $A'' \subseteq A \subseteq [m,b_n]$)  and
\begin{equation*}
\begin{split}
N_m(B_{i_0},z) & \geq  N_{b_n}(B_{i_0},z)-N_{m,b_n}(B_{i_0},z
\\ &
\geq  N_{b_n}(B_{i_0},z)-\frac{2}{\sqrt[3]{x'(n)}} Q_{b_n}(B_{i_0})
\\ &
\geq \left((1-\frac{1}{n})-\frac{2}{\sqrt[3]{x'(n)}}\right) Q_{b_n}(B_{i_0}),
\end{split}
\end{equation*}
since $m=m(i_0)$ and using the definition of $m(i_0)$. 
Since $x'(n)\geq 27$, $N_m(B_{i_0},z)\geq \frac{1}{3} Q_{b_n}(B_{i_0})$ and
in particular $m \geq \frac{1}{3} Q_{b_n}(B_{i_0})$. 
From property~(\ref{bp3}) of the $b_n$  it follows that the 
first element of $A''$ is at least $\frac{4^n}{3} b_{n-1} >2^{n-1} b_{n-1}$,
for all large enough $n$.


Suppose first that $x \notin P$. There is a least $\ell$, which we call $\ell_0$,
such that for infinitely many $n$ we have that $x'(n)=\ell_0$.
At such a stage $n$ in the construction, we consider the first $\sqrt[6]{\ell_0}$
many blocks. So, for infinitely many such $n$ we may assume that the
block $B_{i_0}$ is fixed, that is, the value of $i_0$ does not depend on $n$ along this subsequence. 
Then for large enough such $n$ we have:
\begin{equation} \label{eqns1}
\begin{split}
N^Q_{b_n}(B_{i_0},\varphi_1(x)) & \leq b_{n-1}+ N^Q_{b_n}(B_{i_0},z) -\frac{1}{\ell_0} Q_{b_n}(B_{i_0})\\
&  \leq b_{n-1}+ Q_{b_n}(B_{i_0})\left(1+\frac{1}{n}\right) - \frac{1}{\ell_0} Q_{b_n}(B_{i_0})      \\
& \leq \left(\frac{1}{4^n}+1+\frac{1}{n} -\frac{1}{\ell_0}\right) Q_{b_n}(B_{i_0})\\
& \leq \left(1-\frac{1}{2\ell_0}\right) Q_{b_n}(B_{i_0})
\end{split}
\end{equation}

This shows $\varphi(x)\notin \NQ$ when $x \notin P$. 
Consider the block $B_{i_0}$ which is fixed along the subsequence. If
$B_{i_0}(1) \geq 1$, then we obtain $\varphi_1(x)\res I_n$ from $z \res I_n$ by lowering
certain occurrences of the digit $B_{i_0}(1)$ to $B_{i_0}(1)-1$. This will not
decrease the number of occurrences of the block $0_k$, where $k=|B_{i_0}|$.
So, $N^Q_{b_n}(0_k,\varphi_1(x))\geq N^Q_{b_n}(0_k,z)\geq
Q_{b_n}(0_k)(1-\epsilon)$, for small $\epsilon$ (say $\epsilon < \frac{1}{3\ell_0}$)
and all large enough $n$. This, along with Equation~\ref{eqns1},  shows $\varphi_1(x)\notin \RNQ$. 
If $B_{i_0}(1)=0$, then $r$ maps $0$ 10 $1$ and leaves all other digits unchanged.
This cannot decrease the number of occurrences of of the block $1_k$. 
So, $N^Q_{b_n}(1_k,\varphi_1(x))\geq N^Q_{b_n}(1_k,z)\geq
Q_{b_n}(1_k)(1-\epsilon)$ which again shows $\varphi_1(x)\notin \RNQ$.


Suppose now that $x \in P$ so that $\lim_n x'(n)=\infty$. 
Let $B$ be a good block, and fix $\epsilon >0$. Let $n_0>|B|$ be such that for 
all $n \geq n_0$, $x'(n)$ is large enough that $B$ is one of the first $\sqrt[6]{x'(n)}$ 
many blocks. Consider now $n \geq n_0$ and corresponding interval $I_n$.
Let $\delta>0$ and assume $n$ is sufficiently large and inductively that we have shown
$|N^Q_{b_{n-1}}(B,z)-N^Q_{b_{n-1}}(B,\varphi_1(x))| \leq \delta Q_{b_{n-1}}(B)$.

Consider first the case $B$ is sparse at stage $n$ relative to $B_{i_0}$.
Let $p$ be as before, so $p \leq \sqrt[6]{x'(n)}$.
Then $B=B_j$ for some $j \leq p$.
So, for any $i \in I_n$ we have that if $z \res[ i, i+|B|-1]=B$ or $\varphi_1(x)
\res[ i, i+|B|-1]=B$ then 
$[i,i+|B|-1]\cap A'_n=\emptyset$. Since $x \res I_n$ is obtained from $z \res I_n$ by 
changing the value only at points of $A''_n \subseteq A'_n$, it follows that for
any $k \in [b_{n-1},b_n]$ that 
$|N^Q_{k}(B,z)-N^Q_{k}(B,\varphi_1(x))|= |N^Q_{b_{n-1}}(B,z)-N^Q_{b_{n-1}}(B,\varphi_1(x))|$.
So we have for $k\in [b_{n-1},b_n]$
and large enough $n$:


\begin{equation} \label{eqnr1}
\begin{split}
|N^Q_k(B,\varphi_1(x))-Q_k(B)|& \leq |N^Q_k(B,z)-Q_k(B)|+|N^Q_{b_{n-1}}(B,\varphi_1(x))
-N^Q_{b_{n-1}}(B,z)|
\\ &
\leq |N^Q_k(B,z)-Q_k(B)|+ \delta Q_{b_{n-1}}(B)
\\ &
\leq \frac{1}{2^n}Q_k(B)+  \delta Q_{k}(B)
\\ &
\leq Q_k(B) \left(\frac{1}{2^n}+\delta \right),
\end{split}
\end{equation}
which verifies normality for the block $B$. 
Since 
$|N^Q_{b_n}(B,z)-N^Q_{b_n}(B,\varphi_1(x))|= |N^Q_{b_{n-1}}(B,z)-N^Q_{b_{n-1}}(B,\varphi_1(x))|$,
the inductive hypothesis at $b_n$ follows immediately.

Consider next the case that $B=B_j$ is not sparse at stage $n$ relative to $B_{i_0}$. 
From the definition of $i_0$ and $m$ we have that
\begin{equation} \label{eqnr2}
Q_m(B) \geq Q_{m_j}(B_j)\geq \left(1-\frac{2}{\sqrt[3]{x'(n)}}\right) Q_{b_n}(B_j).
\end{equation}
From the definition of being sparse relative to $B_{i_0}$ we have that
\begin{equation} \label{eqnr3}
Q_{b_n}(B) \geq \frac{1}{4 |B| 4^{|B|} \sqrt{x'(n)}} Q_{b_n}(B_{i_0}).
\end{equation}
Recall that $\varphi_1(x) \res I_n$ and $z\res I_n$ only differ
on $A''$, and $\min(A'')\geq m$.




Now let $k \in [b_{n-1},b_n]$, and we estimate
$|N^Q_k(B,\varphi_1(x))-Q_k(B)|$.
If $k<m$ then $|N^Q_k(B,\varphi_1(x))-Q_k(B)|\leq |N^Q_k(B,z)-Q_k(B)|+
|N^Q_{b_{n-1}}(B,z)-N^Q_{b_{n-1}}(B,\varphi_1(x))|
\leq \left( \frac{1}{2^n}+\delta \right) Q_k(B)$, which verifies normality for $B$.

So, assume $k \geq m$.
We have
\begin{equation*}
\begin{split}
|N^Q_k(B,\varphi_1(x))-Q_k(B)| & \leq b_{n-1}+ \frac{1}{x'(n)} Q_{b_n}(B_{i_0})
+ |N^Q_{k}(B,z)-Q_k(B)|
\\ &
\leq \frac{1}{2^n} Q_{b_n}(B)+  \frac{1}{x'(n)} Q_{b_n}(B_{i_0})+ \frac{1}{2^n} Q_k(B)
\\ &
\leq
\frac{1}{2^n} \frac{1}{1- \frac{2}{\sqrt[3]{x'(n)}}} Q_m(B)
+\frac{1}{x'(n)} Q_{b_n}(B_{i_0})+  \frac{1}{2^n} Q_k(B)
\\ &
\leq \frac{1}{2^{n-2}}Q_m(B)+ \frac{1}{x'(n)} Q_{b_n}(B_{i_0})+  \frac{1}{2^n} Q_k(B)
\\ &
\leq \frac{1}{2^{n-3}}Q_k(B)+ \frac{1}{x'(n)} 4|B|4^{|B|} \sqrt{x'(n)} Q_{b_n}(B)
\\ &
\leq
\frac{1}{2^{n-3}}Q_k(B)+  \frac{1}{x'(n)} 4|B|4^{|B|} \sqrt{x'(n)}
\frac{1}{1- \frac{2}{\sqrt[3]{x'(n)}}} Q_m(B)
\\ &
\leq
\frac{1}{2^{n-3}}Q_k(B)+ \frac{12|B|4^{|B|}}{\sqrt{x'(n)}} Q_k(B)
\end{split}
\end{equation*}
Since $x'(n)\to \infty$, this shows normality for the block $B$.
Similarly, letting $k=b_n$ we have
$|N^Q_{b_n}(B,\varphi_1(x))-N^Q_{b_n}(B,z)|\leq
b_{n-1}+\frac{1}{x'(n)} Q_{b_n}(B_{i_0}) \leq \frac{1}{4^n} Q_{b_n}(B)+ \frac{8|B|4^{|B|}}{\sqrt{x'(n)}} Q_{b_n}(B)
\leq \delta Q_{b_n}(B)$,
which verifies the inductive hypothesis at $b_n$.

This completes the proof that $\NQ$ and $\RNQ$ are $\bp^0_3$-complete assuming
$Q$ is $1$-divergent. If $Q$ is $1$-convergent, then every $x$ is in
$\NQ$ and $\RNQ$, so the conclusion of Theorem~\ref{NandRN} holds trivially. 
\end{proof}

The second part of Theorem~\ref{NandRN} concerning $\NkQ$ and $\RNkQ$
and the proof of  Theorem~\ref{genblocks} are slight generalizations of
the proof of Theorem~\ref{NandRN} given above. 
Since the proofs are similar we just sketch the differences.

\begin{proof}[Proof of Theorem~\ref{genblocks}]
We use the notation
and terminology of the proof of Theorem~\ref{NandRN}. 
We may assume that all of the blocks $B\in \mathscr{C}$ have infinite expectation, that is,
$\displaystyle \lim_{n \to \infty} Q_n(B)=\infty$. At stage $n$ of the construction we again let
$p=\lfloor \sqrt[6]{x'(n)} \rfloor$, and let $B_1,\dots,B_p$
enumerate the first $p$ many blocks of $\mathscr{C}$. We define the block $B_{i_0}$
as before, maximizing the value of $m(i)$ for $1 \leq i \leq p$. For
the first part of Theorem~\ref{genblocks} we may use the same digit changing function
$r$ as in the proof of Theorem~\ref{NandRN}. 
If $x\in P$, then the proof of Theorem~\ref{NandRN}
shows that $\varphi(x)\in \mathscr{N}_\mathscr{C}(Q)$. 
If $x \notin P$, then for infinitely many $n$ the value of $i_0$
will be constant, and $B_{i_0}$ is a block in $\mathscr{C}$. 
As in Equation~\ref{eqns1}, this gives an $\epsilon>0$ such that for infinitely
many $n$ we have $|N^Q_{b_n}(B_{i_0},\varphi_1(x))-Q_{b_n}(B_{i_0})|>\epsilon Q_{b_n}(B_{i_0})$.
Thus, $\varphi(x)\notin \mathscr{N}_\mathscr{C}(Q)$.
For the second statement of Theorem~\ref{genblocks} we modify the argument as the blocks
$0_k$ and $1_k$ used in the proof of Theorem~\ref{NandRN} may not be in $\mathscr{C}$.
The additional hypothesis of Theorem~\ref{genblocks},
however, guarantees the existence of a block $B_j\in \mathscr{C}$ and
an integer $t$ such that $|B_{i_0}(t)-B_j(t')|>1$ for all $t'$. As in the argument
after Equation~\ref{eqns1}, we modify the definition of $\varphi_1(x)\res I_n$
to change by $1$ all occurrences of $B_{i_0}(t)$ in $z \res I_n$
which correspond to a possible occurrence of $B_{i_0}$ (that is, the integers $i \in I_n$ where
$z\res [i-t+1,i+|B_{i_0}|]=B_{i_0}$). This will not affect the number of occurrences
of the block $B_j$ in $I_n$. This gives that $\varphi(x)\notin \mathscr{RN}_\mathscr{C}(Q)$.

\end{proof}

\section{Proof of Theorem~\ref{differences}}


We will show the $D_2(\bP^0_3)$-completeness of the non-trivial combinations of the form
$A\sm B$ where $A,B$ are one of $\NQ$, $\DNQ$, $\RNQ$.
There are $5$ non-trivial combinations, as $\NQ\subseteq \RNQ$.
\refs{4cases} handles four of these cases, which are essentially
done by the same proof. The fifth case, $\RNQ\sm \NQ$, is more complicated and
will be handled in \refs{lastcase}. We note that the
$D_2(\bP^0_3)$-completeness of the sets $A\sm B$ implies that
the  sets of the form $A\cap B$ or of the form $A\cup B$ (for $A,B \in \NQ, \DNQ, \RNQ$)
are $\bP^0_3$-complete by the following simple lemma.


\begin{lem}
If $A,B$ are $\bP^0_3$ and $A\sm B$ is $D_2(\bP^0_3)$-complete, then $A\cup B$ and $A\cap B$ are
$\bP^0_3$-complete.
\end{lem}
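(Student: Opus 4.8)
The plan is to prove both statements by contraposition from the fact that $A \setminus B$ is $D_2(\bP^0_3)$-hard, i.e.\ not in $D_2(\bP^0_3)$ up to complements (equivalently, $A \setminus B \notin D_2(\bP^0_3)$ and $X \setminus (A\setminus B) \notin D_2(\bP^0_3)$, since $D_2(\bP^0_3)$ is non-selfdual). First I would observe that both $A\cup B$ and $A \cap B$ are automatically in $\bP^0_3$: the class $\bP^0_3$ is closed under finite unions and finite intersections, and $A, B \in \bP^0_3$ by hypothesis. So the entire content is the hardness, i.e.\ showing $A \cup B \notin \bS^0_3$ and $A \cap B \notin \bS^0_3$.

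The key algebraic identity I would use is
\[
A \setminus B = (A \cup B) \setminus B = (A\cup B) \cap (X \setminus B).
\]
Now suppose toward a contradiction that $A \cup B \in \bS^0_3$. Since $B \in \bP^0_3$, we have $X \setminus B \in \bS^0_3$, so $A \setminus B = (A\cup B) \cap (X\setminus B)$ would be an intersection of two $\bS^0_3$ sets, hence in $\bS^0_3$. But $A \setminus B$ is also in $D_2(\bP^0_3) \subseteq \bP^0_3$, so $A \setminus B$ would be in $\bS^0_3 \cap \bP^0_3 = \bd^0_3$. This contradicts $D_2(\bP^0_3)$-hardness: a $\bd^0_3$ set is in $D_2(\bP^0_3)$ (indeed in the smaller class $\bS^0_3$, and $\bd^0_3 \subseteq D_2(\bP^0_3)$) and its complement is also in $\bd^0_3 \subseteq D_2(\bP^0_3)$, contradicting that $X \setminus (A\setminus B) \notin D_2(\bP^0_3)$. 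Hence $A \cup B \notin \bS^0_3$, so $A\cup B$ is $\bP^0_3$-complete.

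For $A \cap B$ I would use the dual identity. Write $A \setminus B = A \setminus (A \cap B) = A \cap (X \setminus (A\cap B))$. If $A \cap B \in \bS^0_3$, then $X \setminus (A\cap B) \in \bP^0_3$; but then $A\setminus B = A \cap (X\setminus(A\cap B))$ is an intersection of two $\bP^0_3$ sets, hence in $\bP^0_3$ — which we already knew — so this does not immediately bite. Instead, the cleaner route is to pass to complements at the outset: by De Morgan, $X \setminus (A\cap B) = (X\setminus A) \cup (X \setminus B)$, and one checks $X \setminus (A \setminus B) = (X\setminus A) \cup B$. Since $D_2(\bP^0_3)$ is non-selfdual, $X \setminus (A\setminus B)$ is $\check{D_2(\bP^0_3)}$-complete, in particular not in $\bS^0_3$ after the analogous computation: if $A\cap B \in \bS^0_3$ then $X\setminus(A\cap B) = (X\setminus A)\cup(X\setminus B) \in \bS^0_3$; since $X\setminus A, X \setminus B \in \bS^0_3$, and $(X\setminus A)\cup B$ differs from $(X\setminus A)\cup (X\setminus B)$ — hmm, these are not equal. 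The reliable version: note $A \cap B \in \bS^0_3$ would give, together with $A\setminus B = A \cap (X\setminus(A\cap B))$ and... the honest statement is that I should mirror the first argument exactly, replacing the pair $(A\cup B, A\setminus B)$ by the complementary pair. The main (and only) obstacle is bookkeeping with the pointclass arithmetic: one must be careful that $D_2(\bP^0_3)$-completeness of $A\setminus B$ supplies \emph{both} $A\setminus B \notin \check{D_2(\bP^0_3)}$ and $X\setminus(A\setminus B)\notin D_2(\bP^0_3)$, and in particular $A \setminus B \notin \bd^0_3$ and $X\setminus(A\setminus B)\notin \bd^0_3$, and then each desired conclusion ($A\cup B \notin \bS^0_3$, $A\cap B\notin \bS^0_3$) follows by writing $A\setminus B$ (resp.\ its complement) as a Boolean combination that would land in $\bd^0_3$ under the contrary assumption. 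I expect the write-up to be three or four lines once the right identity is selected for each half.
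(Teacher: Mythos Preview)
Your identities are exactly the ones the paper uses: $A\setminus B=(A\cup B)\setminus B$ for the first half and $A\setminus B=A\setminus(A\cap B)$ for the second. The gap is a recurring error in the pointclass arithmetic: you write ``$D_2(\bP^0_3)\subseteq\bP^0_3$'', but the inclusion goes the other way. In fact both $\bP^0_3$ and $\bS^0_3$ sit \emph{inside} $D_2(\bP^0_3)$ (take $C=C\setminus\emptyset$ for $\bP^0_3$, and $C=X\setminus(X\setminus C)$ for $\bS^0_3$). So you never actually had $A\setminus B\in\bP^0_3$ for free, and your detour through $\bd^0_3$ is both unnecessary and unjustified.

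This same confusion is what derails your $A\cap B$ argument. When you compute that $A\cap B\in\bS^0_3$ forces $A\setminus B=A\cap(X\setminus(A\cap B))\in\bP^0_3$, you dismiss it as ``which we already knew''---but you \emph{didn't} already know it, and this is precisely the contradiction: if $A\setminus B\in\bP^0_3$ then its complement lies in $\bS^0_3\subseteq D_2(\bP^0_3)$, contradicting $D_2(\bP^0_3)$-hardness. That is the entire second half of the paper's proof, and you had it in hand before wandering off into De~Morgan manipulations that never close. Similarly, for the first half, once $A\setminus B\in\bS^0_3$ you are done immediately: the complement is in $\bP^0_3\subseteq D_2(\bP^0_3)$, again contradicting hardness. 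Fix the direction of that one inclusion and both cases collapse to one line each, exactly as in the paper.
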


\begin{proof}
Suppose that $A\cup B$ were $\bS^0_3$. Then $A\sm B= (A\cup B)\sm B$ would be $\bS^0_3$, a contradiction.
Likewise, if $A\cap B$ were $\bS^0_3$ then $A\sm B=A\sm (A\cap B)$ would be $\bP^0_3$, a contradiction. 
\end{proof}

Lastly, we note that since $\NQ, \RNQ$, and $\DNQ$ are sets of full
measure by \reft{measure}, their intersections are nonempty.  We
will freely use this fact without mentioning it.

\subsection{Completeness of $\DNQ\sm \NQ$, $\DNQ\sm \RNQ$, $\NQ\sm \DNQ$, and $\RNQ\sm \DNQ$}\labs{4cases}

\begin{thm} \label{4cases}
Let $Q$ be infinite in limit and $1$-divergent.
Then the sets
$\DNQ\sm \NQ$, $\DNQ\sm \RNQ$, $\NQ\sm \DNQ$, and $\RNQ\sm \DNQ$ are all $\dt$-complete.
\end{thm}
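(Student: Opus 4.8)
The plan is to prove that each of the four difference sets is $D_2(\bP^0_3)$-complete by exhibiting, for each one, a continuous reduction from a fixed $D_2(\bP^0_3)$-complete subset of $\ww \times \ww$. A natural such complete set is $\{(x,y) : x \in P \text{ and } y \notin P\}$, where $P = \{ w \in \ww : \lim_n w(n) = \infty\}$ is the $\bP^0_3$-complete set used in the proof of Theorem~\ref{thm:DNQ}; indeed $D_2(\bP^0_3)$ sets are exactly differences of two $\bP^0_3$ sets, and $P \times (\ww \sm P)$ is universal for such differences in a Polish product space. So the task reduces to building, for each pair $(A,B)$ among the four cases, a continuous $\varphi \colon \ww\times\ww \to [0,1]$ with $\varphi^{-1}(A \sm B) = \{(x,y): x\in P, y\notin P\}$.

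The key idea is that the two coordinates $x$ and $y$ can be made to act on \emph{disjoint} sets of block-indices, so their effects do not interfere. First I would split $\N$ into two interleaved families of blocks: on the ``$x$-blocks'' run the construction from the proof of Theorem~\ref{NandRN} (or from Theorem~\ref{thm:DNQ}), driven by $x$, so that membership of $\varphi(x,y)$ in $\NQ$ (equivalently $\RNQ$, by the argument after Equation~\ref{eqns1}) is controlled by whether $x \in P$; on the ``$y$-blocks'' run the Theorem~\ref{thm:DNQ} construction driven by $y$, so that membership in $\DNQ$ is controlled by whether $y\in P$. Concretely, for the case $A\sm B = \NQ \sm \DNQ$ I want $\varphi(x,y) \in \NQ$ iff $x \in P$ and $\varphi(x,y)\in \DNQ$ iff $y \notin P$ — wait, that is backwards; I want $\varphi(x,y)\in\NQ\sm\DNQ$ iff $x\in P$ and $y\notin P$, so I arrange $\varphi(x,y)\in\NQ \iff x\in P$ and $\varphi(x,y)\notin\DNQ \iff y \notin P$, i.e. $\varphi(x,y)\in\DNQ\iff y\in P$. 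For $\RNQ\sm\DNQ$ the $x$-block construction instead only needs to destroy $\RNQ$ off $P$ (automatic, since the Theorem~\ref{NandRN} construction destroys both). For $\DNQ\sm\NQ$ and $\DNQ\sm\RNQ$ we swap the roles: the $x$-blocks carry the $\DNQ$-reduction and the $y$-blocks carry the (anti-)normality reduction, so that $\varphi(x,y)\in\DNQ\iff x\in P$ while $\varphi(x,y)\in\NQ$ (resp.\ $\RNQ$) $\iff y\in P$.

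The main obstacle — and the reason this is not completely automatic — is \textbf{non-interference between the two constructions}. The Theorem~\ref{NandRN} construction alters digits $z_i$ on the $x$-blocks; I must check these alterations do not disturb distribution normality, and conversely the Theorem~\ref{thm:DNQ} alterations on the $y$-blocks must not disturb (ratio) normality. For the first: by Remark~\ref{rdn}, $\DNQ$-membership depends only on the density-zero-modification class and on the values $a_i/q_i$; the Theorem~\ref{NandRN} construction changes only a $\tfrac{1}{x'(n)}Q_{b_n}(B_{i_0})$-fraction of each $x$-block, which is a density-zero set of indices overall provided the $x$-blocks are chosen thin enough inside $\N$ (or even if not, because $x'(n)\to\infty$ when $x\in P$; when $x\notin P$ we do not care about $\DNQ$ on the $x$-blocks as long as the $y$-blocks dominate density). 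For the second: the Theorem~\ref{thm:DNQ} construction raises digits to make $a_i/q_i$ cross a threshold, and since $Q$ is infinite in limit these changes occur on a set of indices of density $0$ inside each $y$-block when $y\in P$, hence globally density $0$, so by the block-counting estimates (Equation~\ref{eqnr1}) normality of every good block is preserved; when $y\notin P$ we are trying to destroy $\DNQ$ anyway. One must take the two interleaved block families to grow fast enough that \emph{each} family individually has density tending to $1$ along its own block-endpoints infinitely often (so the ``bad behaviour'' of whichever coordinate is outside $P$ actually manifests in the limit), which forces a slightly delicate choice of the partition of $\N$; I would handle this by using a single fast-growing sequence $0 = b_0 < b_1 < \cdots$ and declaring odd-indexed gaps to be $x$-blocks and even-indexed gaps to be $y$-blocks, with $b_n/b_{n-1}\to\infty$ so that each gap dominates everything before it. Finally, the reduction is continuous because $\varphi_1(x,y)\res I_n$ depends only on finitely many values of $x$ and $y$, and $\varphi_2$ is continuous; and full measure of all three classes (Theorem~\ref{measure}) guarantees the needed reference points $z$ exist.
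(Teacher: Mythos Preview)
Your spatial-separation idea is natural but has a real gap in the non-interference argument. Consider the case $\NQ\sm\DNQ$: you need $\varphi(x,y)\in\NQ$ precisely when $x\in P$ \emph{and} $y\notin P$. But when $y\notin P$, say $y'(n)=c$ infinitely often, the Theorem~\ref{thm:DNQ} construction you invoke on the $y$-blocks alters a \emph{positive-density} set of digits (roughly a $\tfrac{1}{c+2}$ fraction of each such block), and each alteration can be large---a digit $z_i$ near $0$ is raised to roughly $q_i/(c+2)$. These changes will destroy occurrences of small blocks like $0_k$ on a positive fraction of the $y$-block, and since you arranged $b_n/b_{n-1}\to\infty$ so that each $y$-block dominates at its right endpoint, normality of $\varphi(x,y)$ fails there. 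Your sentence ``when $y\notin P$ we are trying to destroy $\DNQ$ anyway'' dismisses exactly the case where you must \emph{preserve} $\NQ$. The analogous problem arises for $\RNQ\sm\DNQ$.

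The paper's proof avoids spatial separation altogether and instead uses \emph{scale} separation: on \emph{every} interval $I_n$ it applies two commuting operations driven by the even and odd digits of a single $x\in\ww$. The normality-controlling operation $\Theta_{k,\ell}$ changes selected $0$s to $1$s (a bounded digit change, hence harmless for $\DNQ$ by Remark~\ref{rdn} since $q_i\to\infty$), while the $\DNQ$-controlling operation $\Xi_k$ only touches digits in the window $[\tfrac{k}{k+1}q_j,q_j)$, sending them to $q_j-1$. The point of $\Xi_k$ is that any fixed block $B$ has bounded entries, so once $q_j$ is large enough no entry of $B$ lies in that window; hence $\Xi_k$ changes $N^Q_m(B,\cdot)$ by only $O(1)$, regardless of the parameter $k$. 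Thus $\Xi$ destroys or preserves $\DNQ$ according to whether $x(2n+1)\to\infty$, but \emph{never} disturbs normality---exactly the decoupling your argument lacks. Replacing your $y$-block construction with an operation of $\Xi$ type would repair the gap, but at that point you have essentially recovered the paper's proof and the block interleaving becomes unnecessary.
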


\begin{proof}
Let $C=\{ x \in \ww \colon x(2n) \to \infty\}$, $D=\{ x \in \ww\colon x(2n+1)\to \infty\}$.
It is easy to see that $C\sm D$ is $\dt$-complete. 

Fix a fast growing sequence $\{ b_n\}$, so in particular $(2^n\sum_{i<n}b_i)/b_n \to 0$.
Recall $0_k$ denotes the sequence of length $k$ consisting of all $0$s. We introduce two basic operations
which can be performed on an interval $I\in \N^{[a,b]}$ of digits:

$\Theta_{k,\ell}$: Let $A\subseteq [a,b]$ be the set of $j$ which start an occurrence of
$0_k$, that is, $(I(j),\dots,I(j+k-1))=0_k$. Let $A'\subseteq A$ be the last $\lfloor
\frac{|A|}{\ell}  \rfloor$ many elements of $A$. For each $j \in A'$, 
change the digit $I(j)$ from a $0$ to a $1$.

$\Xi_k$: For every $j\in [a,b]$ with $I(j)\in [\frac{k}{k+1} q_j, q_j]$, change the digit
from $I(j)$ to $q_j-1$.

For the difference hierarchy results  we will use both operations,
exploiting the fact that, roughly speaking,  they allow us to modify normality/ratio normality and
distribution normality independently.

Let $z\in \DNQ \cap \RNQ$, and let $(z_i)_{i=1}^\infty\in X_Q$ be the
digits of the $Q$-Cantor series expansion of $z$.

We suppose the $b_n$ are chosen so that for all $k \leq 2n$ such that $Q$ is $k$-divergent we have
$\QNK k{b_n}> 2^n b_{n-1}$ and $\forall m \geq b_{n}\ |N^Q_m(0_k,z)-\QNK k{m}|< \frac{1}{2^n} \QNK k{m}$.

Suppose first that $\lim_n \QNK kn=\infty$ for all $k$, that is, $Q$ is fully divergent.
Given $x \in \ww$, we define $\varphi_1(x)\in X_Q$ as follows.
Let $x'(n)=\max\{ 2, \min\{ x(n),n\}\}$. 
Consider the interval of digits
$z\res I_n$, where $I_n= [b_{n-1},b_n)$. We let $\varphi_1(x)\res [b_{n-1}, b_n)$ be given by
starting with $z\res I_n$ and applying the operation $\Theta_{x'(2n),x'(2n)}$ and then the operation
$\Xi_{x'(2n+1)}$ to it. 


Recall $\varphi_2\colon X_Q\to [0,1]$ is the continuous map
\[
\varphi_2(d_1, d_2\dots)= \sum_{i=1}^\infty \frac{d_i}{q_1\cdots q_i}.
\]

We show that $\varphi=\varphi_2 \circ \varphi_1$ is a reduction from $C\sm D$ to $\NQ\sm \DNQ$. 
In fact, we show that $x \in C$ iff $\varphi(x) \in \NQ$ and $x \in D$ iff $\varphi(x)\in \DNQ$.

Since $q_i\to \infty$, the $\Theta$ operation does not affect distribution normality
as it involves changing each digit in the $Q$
Cantor series expansion by at most $1$ (see Remark~\ref{rdn}). 
Also, as $q_i\to \infty$ we have that the $\Xi$ operation does not
effect normality, since for any block of digits $B$ we have that
$|N^Q_n(B,z)-N^Q_n(B,y)|$ is bounded with $n$ (regardless of $x$),
where $y$ is the result of applying the $\Xi$ operation in all of the $I_n$.

First suppose $x\in C$, so $x(2n)\to \infty$. Let $w$ be the result of
applying just the first operation $\Theta_{x'(2n),x'(2n)}$ to $z$ in each of the intervals $I_n$.
We claim that $w \in \NQ$. 
Consider a block $B$ of digits, and let  $k=|B|$ denote its length.
First note that for any $\epsilon>0$ all large enough $n$ we have
\begin{equation*}
\begin{split}
| N^Q_{b_n}(B,w)-N^Q_{b_n}(B,z)|& \leq b_{n-1}+ \frac{2|B| }{x'(2n)} N^Q_{b_n}(0_{x'(2n)},z)
\\ &
\leq  b_{n-1}+ \frac{2|B| }{x'(2n)} \left(1+\frac{1}{2^n}\right) Q_{b_n}(0_{x'(2n)})
\\ &
\leq  b_{n-1}+ \frac{2|B| }{x'(2n)} 2 Q_{b_n}(B)
\\ &
\leq \epsilon Q_{b_n}(B).
\end{split}
\end{equation*}
Since $z \in \NQ$, for large enough $n$ we have
$| N^Q_{b_n}(B,w)- Q_{b_n}(B)|<2 \epsilon Q_{b_n}(B)$. 
Fix $\epsilon >0$, and consider now $n$ large enough and $k \in [b_{n-1},b_n)$. Let $m$ be the first element
of $A'$ in $I_n$. Note that $|A| \geq 2^n b_{n-1}$, and so $m \geq 2^{n-1}b_{n-1}$ as $x'(2n)\geq 2$.
If $k <m$ then we have
\begin{equation*}
\begin{split}
| N^Q_k(B,w)-Q_k(B)| & \leq |N^Q_{b_{n-1}}(B,w)-Q_{b_{n-1}}(B)|
+ |N^Q_{b_{n-1},k}(B,w)-Q_{b_{n-1},k}(B)|
\\ &
\leq 2\epsilon Q_{b_{n-1}}(B)+ |N^Q_{b_{n-1},k}(B,z)-Q_{b_{n-1},k}(B)|
\\ &
\leq 2\epsilon Q_{b_{n-1}}(B)+ |N^Q_k(B,z)-Q_k(B)|+ |N^Q_{b_{n-1}}(B,z)-Q_{b_{n-1}}(B)|
\\ &
\leq \left(2\epsilon +\frac{2}{2^n}\right) Q_k(B).
\end{split}
\end{equation*}

If $k\geq m$, then for large enough $n$ first note that we have
\begin{equation*}
\begin{split}
Q_k(B) & \geq Q_k(0_{x'(2n)}) \geq (1-\frac{1}{2^n}) N^Q_k(0_{x'(2n)},z)
\\ &
\geq (1-\frac{1}{2^n}) (1-\frac{1}{x'(2n)})N^Q_{b_n}(0_{x'(2n),z})
\\ &
\geq (1-\frac{1}{2^n})^2 (1-\frac{1}{x'(2n)}) Q_{b_n}(0_{x'(2n)})
\\ &
\geq \frac{1}{2} Q_{b_n}(0_{x'(2n)})
\\ &
\geq   2^{n-1} b_{n-1}.
\end{split}
\end{equation*}

So we have 
\begin{equation} \label{eqnv1}
\begin{split}
| N^Q_k(B,w)-Q_k(B)| & \leq b_{n-1}+ \frac{2|B| }{x'(2n)} N^Q_{k}(0_{x'(2n)},z)+
|N^Q_k(B,z)-Q_k(B)|
\\&
\leq \frac{2}{2^n}Q_k(B)+ \frac{2|B| }{x'(2n)} N^Q_{k}(0_{|B|},z) +\frac{1}{2^n}Q_k(B)
\\ &
\leq \frac{3}{2^n}Q_k(B)+  \frac{3 |B| }{x'(2n)} Q_{k}(B)
\\ &
\leq \epsilon Q_k(B).
\end{split}
\end{equation}
This shows that $w \in \NQ$.

If $x \notin C$, say $x(2n)= c$ for infinitely many $n$,
then for infinitely many $n$ we have
\begin{equation} \label{rr}
\begin{split}
N^Q_{b_n}(0_c,w) & \leq N^Q_{b_n}(0_c,z) +
b_{n-1}-\frac{1}{c} N^Q_{b_{n-1},b_n}(0_c,z)
\\ &
\leq \left(1+\frac{1}{2^n}\right) Q_{b_n}(0_c) +2 b_{n-1} - \frac{1}{c} N^Q_{b_n}(0_c,z)
\\ &
\leq \left(1+\frac{3}{2^n}\right) Q_{b_n}(0_c) - \frac{1}{c} \left(1-\frac{1}{2^n}\right) Q_{b_n}(0_c)
\\ &
\leq \left(1-\frac{1}{2c}\right) Q_{b_n}(0_c).
\end{split}
\end{equation}
On the other hand, the block $1_c$ occurs in $w \res [0,b_n)$
at least as many times as it does in $z\res [0,b_n)$.
So, $N^Q_{b_n}(1_c,w) \geq N^Q_{b_n}(1_c,z) \geq (1-\frac{1}{2^n}) Q_{b_n}(1_c)$.
Since $Q_{b_n}(0_c)=Q_{b_n}(1_c)$, it follows that
$w \notin \RNQ$ (and also $w \notin \NQ$).

If $x \in D$, so $x(2n+1)\to \infty$, then $\varphi(x) \in \DNQ$.
We use the fact that if $u=.u_1u_2\dots \in \DNQ$
and $v=.v_1v_2\dots$ is such that $|(u_i-v_i)/q_i|\to 0$, then $v \in \DNQ$ (see Remark~\ref{rdn}).
If $x \notin D$,
then $\varphi(x) \notin \DNQ$ since for infinitely many intervals $[b_{n-1},b_n)$
we have that $(\varphi_1(x))(i)/q_i \notin [1-\epsilon,1]$, where $\epsilon = \frac{1}{c+1}$
and $x(2n+1)=c$ for infinitely many $n$.

So we have that if $x \in C$ then $\varphi(x)\in \NQ$, and if $x \notin C$ then
$\varphi(x)\notin \RNQ$. 
Also, $x \in D$ iff $\varphi(x) \in \DNQ$. Thus, in the last two cases of the theorem,
$\varphi$ is a reduction of $C\sm D$ to the desired difference set.
For the first two cases of the theorem, $\varphi$ is a reduction of $D\sm C$ to the desired
difference set. This completes the proof of Theorem~\ref{4cases} in the case where $Q$
is fully divergent.

Assume now that that there is a largest integer $k_0$ such that $Q$ is $k_0$-divergent. 
We again obtain $\varphi_1(x)\res[b_{n-1},b_{n})$ by applying two operations.
One of these is $\Xi_{x'(2n+1)}$, where the operation $\Xi$ is as before.
For the other, we use operation $\Theta_{k_0, x'(2n)}$.

If $x \notin C$, then $\varphi(x)\notin \RNQ$ as before. If $x \in C$, that is,
$x(2n)\to \infty$, then the argument of Equation~\ref{eqnv1}
shows that $\lim_k \frac{|N^Q_k(B,w)-Q_k(B)|}{Q_k(B)}=0$ for any block
$B$ of length $\leq k_0$. Since this accounts for all of the blocks of infinite
expectation, we have that $w \in \NQ$. Since the second operation
does not affect normality, it  follows that $\varphi(x)\in \NQ$.

As before, we have that $x\in D$ iff $\varphi(x)\in \DNQ$.
So we again have that $x \in C$ implies $\varphi(x)\in \NQ$, $x \notin C$
implies $\varphi(x)\notin \RNQ$, and $x \in D$
iff $\varphi(x)\in \DNQ$. Thus, as in the previous case $\varphi$ gives
the desired reductions. 

\end{proof}

\subsection{Completeness of $\RNQ\sm \NQ$}\labs{lastcase}

We will need to define a class of functions $\ppq$ in order to prove
the last case of Theorem~\ref{differences}.
Let $P=(p_i)$ and $Q=(q_i)$ be basic sequences. If $x=a_0.a_1a_2 \cdots$ w.r.t.\ {$P$}, then put
$$
\ppq(x)=\sum_{i=1}^\infty \frac {\min(a_i,q_i-1)} {q_1 \cdots q_i}.
$$
We will need the following theorem of \cite{ppq1}.

\begin{thm}\labt{mainpsi}
Suppose that $P$ and $Q$ are basic sequences which are infinite in limit.
  If $x=a_0.a_1a_2\cdots$ w.r.t.\ $P$ satisfies
$a_i< q_{i}-1$ for infinitely many $i$, then for every block~$B$
$$
N_{i}^{Q}\left(B,\psi_{P,Q}(x)\right)
=N_{i}^{P}(B,x)+O(1).
$$
\end{thm}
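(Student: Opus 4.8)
The statement to prove is Theorem~\ref{thm:mainpsi}: if $P$ and $Q$ are basic sequences infinite in limit and $x = a_0.a_1a_2\cdots$ w.r.t.\ $P$ has $a_i < q_i - 1$ infinitely often, then $N_i^Q(B,\ppq(x)) = N_i^P(B,x) + O(1)$ for every block $B$. The plan is to analyze digit-by-digit the relationship between the $P$-expansion of $x$ and the $Q$-expansion of $\ppq(x)$, show they agree at all but finitely many positions, and then observe that a discrepancy at finitely many positions changes block-occurrence counts only by a bounded amount.

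First I would verify that $\ppq(x)$, as defined by the series $\sum_i \frac{\min(a_i, q_i-1)}{q_1\cdots q_i}$, is in fact the genuine $Q$-Cantor series expansion of the real number it names — i.e.\ that the digit string $c_i := \min(a_i, q_i - 1)$ is a valid $Q$-expansion (each $c_i \in \{0,\dots,q_i-1\}$, which is immediate, and $c_i \neq q_i - 1$ infinitely often). The latter is where the hypothesis $a_i < q_i - 1$ infinitely often is used: at each such $i$, if also $a_i < q_i$... but wait, $a_i$ can be arbitrarily large since it is a $P$-digit. The key point is that when $a_i < q_i - 1$ we get $c_i = \min(a_i, q_i-1) = a_i < q_i - 1$, so infinitely many $c_i$ are strictly below $q_i - 1$, giving validity/uniqueness of the $Q$-expansion. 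Hence the $Q$-digits of $\ppq(x)$ are exactly $c_i = \min(a_i, q_i-1)$.

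Next, the core estimate: I claim $c_i = a_i$ for all sufficiently large $i$. Indeed, since $Q$ is infinite in limit, $q_i \to \infty$; and since $x$ has a convergent $P$-Cantor series, the digits $a_i$ satisfy $a_i/p_i \to 0$... but that does not bound $a_i$ outright. However, $a_i < p_i$ always (as $P$-digits), and — hold on, there is no a priori bound of $a_i$ by $q_i$. The right observation is subtler: the tail $\sum_{j \geq i} a_j/(p_1\cdots p_j) < 1/(p_1\cdots p_{i-1})$ forces $a_i < p_i$, but to compare with $q_i$ I should use that $P$ infinite in limit gives a bound on how often $a_i$ is large relative to $q_i$. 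In fact the cleanest route, following \cite{ppq1}, is: the set of $i$ with $a_i \geq q_i - 1$ has the property that $\sum$ over such $i$ of (contributions) is controlled, but more simply, since both $P,Q$ are infinite in limit one shows $a_i < q_i - 1$ for all large $i$ directly from $a_i = o(p_i)$ only when $p_i = O(q_i)$, which need not hold. So the genuinely correct claim is weaker: $c_i \neq a_i$ can happen infinitely often, but I would instead show that $\min(a_i, q_i - 1)$ and $a_i$ — when they differ, the difference only \emph{truncates} a digit, and crucially, reading blocks of bounded length $|B| = k$ in the $Q$-string $(c_i)$ versus the $P$-string $(a_i)$: an occurrence of $B$ in one but not the other requires some position in the window to have $a_i \geq q_i - 1$ while $c_i = q_i - 1$; the number of $i \leq n$ with $a_i \geq q_i - 1$ is where the $O(1)$ must come from. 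The main obstacle, and the heart of the proof, is therefore establishing that $\#\{i : a_i \geq q_i - 1\} < \infty$ (equivalently $a_i < q_i - 1$ for all large $i$), which I expect is proved in \cite{ppq1} using a counting/measure argument: since $x$ ranges over a set where its $P$-expansion is "generic enough," or more likely directly from the convergence of $\sum 1/q_i$-type considerations together with $p_i, q_i \to \infty$ — and here I would simply cite \cite{ppq1} for this digit-stabilization lemma rather than reprove it.

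Granting that $c_i = a_i$ for all $i \geq i_0$, the conclusion is routine: for any block $B$ with $|B| = k$, and any $n$, the occurrences of $B$ with start position in $[i_0, n-k+1]$ are identical in $(a_i)$ and $(c_i)$, so $N_n^P(B,x)$ and $N_n^Q(B, \ppq(x))$ differ only by occurrences whose window meets $[1, i_0 + k - 1]$, of which there are at most $i_0 + k - 1 = O(1)$ (the implied constant depending on $B$ and on $x$ through $i_0$, but not on $n$). This gives $N_n^Q(B,\ppq(x)) = N_n^P(B,x) + O(1)$, completing the proof. I would write the argument in this order: (1) validity of $\ppq(x)$ as a $Q$-expansion, using the hypothesis; (2) statement and citation (or short proof) of digit stabilization $c_i = a_i$ eventually; (3) the bounded-discrepancy counting argument for block occurrences.
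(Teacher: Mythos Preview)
First, note that the present paper does not itself prove this theorem; it is quoted from \cite{ppq1} and used as a black box. So there is no proof here to compare against.

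That said, your plan has a genuine gap. Your step~(2), the ``digit stabilization'' claim that $c_i := \min(a_i,q_i-1) = a_i$ for all large $i$ (equivalently $\#\{i : a_i \geq q_i\} < \infty$), is false under the stated hypotheses. For instance, take $p_i = 2^i$, $q_i = i+2$, and let $a_{2i} = 0$, $a_{2i+1} = 2^{2i}$. Both $P$ and $Q$ are infinite in limit and $a_i < q_i - 1$ at every even index, so the hypothesis holds; but $a_{2i+1} > q_{2i+1} - 1$ for all large $i$, so $c_i \neq a_i$ infinitely often. You acknowledge this difficulty mid-proposal, yet your stated resolution is to cite \cite{ppq1} for exactly this (false) stabilization lemma; that citation cannot succeed, and in any case \cite{ppq1} is the very source of the theorem you are trying to prove.

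The correct argument bypasses digit stabilization entirely, and you nearly wrote it down before turning back. Fix a block $B = (e_1,\dots,e_k)$. If $B$ occurs at position $j$ in one of the strings $(a_i)$, $(c_i)$ but not the other, then at some coordinate $t$ we have $c_{j+t-1} \neq a_{j+t-1}$ while one of $a_{j+t-1}$, $c_{j+t-1}$ equals $e_t$. The inequality $c_{j+t-1} \neq a_{j+t-1}$ forces $c_{j+t-1} = q_{j+t-1}-1 < a_{j+t-1}$, and then either $e_t = a_{j+t-1} > q_{j+t-1}-1$ or $e_t = c_{j+t-1} = q_{j+t-1}-1$; in both cases $q_{j+t-1} \leq e_t + 1 \leq \|B\|$. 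Since $Q$ is infinite in limit, only finitely many indices $j$ allow this, giving the $O(1)$ bound. The finite set driving the proof is $\{i : q_i \leq \|B\|\}$, not $\{i : a_i \geq q_i - 1\}$; you located the finiteness in the wrong place.
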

While \reft{mainpsi} is not difficult to prove, it has been an
essential tool in proving some of the more difficult theorems about
$Q$-normal numbers.

Recall that for a basic sequence $Q$ and block $B$ that $Q_n(B)$ (see Equation~\ref{eqn:Qn})
denotes the expected number of occurrences of $B$ with a starting position in $[1,n]$
with respect to the basic sequence $Q$. 
Since we will be dealing with several basic sequences in this section, we extend this notation in
a natural manner. Namely, if $P$ (or $R$) are basic sequences, then we let $P_n(B)$ (or
$R_n(B)$) denote the expected number of occurrences of $B$ with a starting position in $[1,n]$
with respect to $P$ (or $R$). We similarly use the notation $N^P_n(B,z)$ 
to denote the number of occurrences of $B$ in the $P$-Cantor series for $z$ with starting position in $[1,n]$.

We will use the following lemma about concatenating intervals of normal
sequences for different basic sequences.

\begin{lem} \label{lem:t2}
Let $P$, $Q$ be basic sequences which are infinite in limit and assume that
$\limsup_i \frac{p_i}{q_i}$ is finite. 
Let $u \in \NP$, and $v \in \NQ$.
Let $B \in \omega^{<\omega}$ have infinite expectation with respect to $P$ and $Q$,
and let $\epsilon >0$. Then there is an $i_0$ such that if $i'>i\geq i_0$
then $|N^R_{i'}(B,w)-R_{i'}(B)|<\epsilon R_{i'}(B)$ where
$w\res [0,i)=u$, $w \res [i,\infty)=v$, $R\res [0,i)=P$, and
$R\res [i,\infty)=Q$.
\end{lem}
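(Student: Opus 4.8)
\textbf{Proof plan for Lemma~\ref{lem:t2}.}
The plan is to track the count $N^R_{i'}(B,w)$ by splitting the starting positions for occurrences of $B$ into three groups: those lying entirely in the $P$-part $[0,i)$, those lying entirely in the $Q$-part $[i,\infty)$, and the finitely many ``boundary'' positions $j$ with $i-|B| < j < i$ whose window $[j,j+|B|-1]$ straddles the cut. The boundary group contributes at most $|B|$ occurrences, a constant independent of $i'$, so it is absorbed into the error term. For the interior-$P$ group, the count is exactly $N^P_{i}(B,u)$ (another quantity bounded in $i'$ once $i$ is fixed, though we will not even need to be that precise). For the interior-$Q$ group, the relevant count is essentially $N^Q_{i,i'}(B,v) = N^Q_{i'}(B,v) - N^Q_{i}(B,v)$, the number of occurrences of $B$ in $v$ with a start in $[i,i']$. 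Likewise $R_{i'}(B) = P_i(B) + Q_{i,i'}(B) + O(1)$, where the $O(1)$ comes from the at-most-$|B|-1$ boundary terms in the defining sum~\eqref{eqn:Qn} (the terms $\frac{\mathcal{I}_{R,j}(B)}{\cdots}$ for $j$ near $i$ involve a mix of $p$'s and $q$'s, but each such term is bounded and there are fewer than $|B|$ of them). The hypothesis $\limsup_i p_i/q_i < \infty$ is exactly what keeps these mixed boundary terms bounded; without it a single term $\frac{1}{r_j \cdots r_{j+|B|-1}}$ could blow up.

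First I would fix $\epsilon > 0$ and the block $B$, with $k = |B|$. Using $v \in \NQ$ and that $B$ has infinite $Q$-expectation, choose $i_0$ large enough (via Remark~\ref{equivcond}-style estimates, or directly) so that for all $m \geq i_0$ we have $|N^Q_m(B,v) - Q_m(B)| < \tfrac{\epsilon}{4} Q_m(B)$, and simultaneously so that $Q_{i_0}(B)$ is large compared to the fixed constants $P_{i}(B)$-type and $O(1)$-type quantities that will appear once $i$ is fixed — but since $i$ itself ranges, the cleaner route is: choose $i_0$ so that for all $m \geq i_0$, $Q_m(B)$ exceeds a large multiple of $k$, and the $Q$-normality estimate above holds. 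Then for $i' > i \geq i_0$, write
\[
N^R_{i'}(B,w) = N^P_{i}(B,u) + N^Q_{i,i'}(B,v) + \theta_1,\qquad |\theta_1| \leq k,
\]
and
\[
R_{i'}(B) = P_{i}(B) + Q_{i,i'}(B) + \theta_2,\qquad |\theta_2| \leq k.
\]
Subtracting, $|N^R_{i'}(B,w) - R_{i'}(B)| \leq |N^P_i(B,u) - P_i(B)| + |N^Q_{i,i'}(B,v) - Q_{i,i'}(B)| + 2k$. The middle term is at most $|N^Q_{i'}(B,v) - Q_{i'}(B)| + |N^Q_i(B,v) - Q_i(B)| < \tfrac{\epsilon}{4}(Q_{i'}(B) + Q_i(B)) \leq \tfrac{\epsilon}{2} Q_{i'}(B)$. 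The first term is at most $|N^P_i(B,u) - P_i(B)|$, which since $u \in \NP$ is $< \tfrac{\epsilon}{4} P_i(B)$ for $i \geq i_0$ (enlarging $i_0$ if needed, using $u \in \NP$ and that $B$ has infinite $P$-expectation); and $P_i(B) = O(Q_i(B))$ because $\limsup p_j/q_j$ is finite, so this is $\leq C\epsilon Q_i(B) \leq C\epsilon Q_{i'}(B)$ up to adjusting constants, or one simply notes $P_i(B) \leq R_{i'}(B)$ plus a bounded error and folds it in. Finally $2k$ is swallowed by $\tfrac{\epsilon}{4} Q_{i'}(B)$ once $i' \geq i \geq i_0$ with $i_0$ large enough that $Q_{i_0}(B) > 8k/\epsilon$. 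Collecting, $|N^R_{i'}(B,w) - R_{i'}(B)| < \epsilon Q_{i'}(B)$, and since $Q_{i'}(B) \leq R_{i'}(B)(1 + o(1)) \leq 2R_{i'}(B)$ (or by re-running with $\epsilon/2$), this gives the claimed $\epsilon R_{i'}(B)$ bound.

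The main obstacle is the bookkeeping at the boundary: being careful that $R_{i'}(B)$, $P_i(B)$, and $Q_{i,i'}(B)$ really do differ only by $O(1)$, which requires writing out the sum in~\eqref{eqn:Qn} for $R$ and checking that the terms indexed by $j \in (i-k, i)$ — the only ones where the denominators $r_j r_{j+1}\cdots r_{j+k-1}$ mix $p$'s and $q$'s — are each bounded (here the $\limsup p_i/q_i < \infty$ hypothesis, together with $q_j \geq 2$, makes each such term $\leq$ a constant), and that there are fewer than $k$ of them. A secondary subtlety is relating $P_i(B)$ to $Q_{i'}(B)$: one needs $\limsup p_i/q_i < \infty$ again to bound $P_i(B)$ by a constant times $Q_i(B) \leq Q_{i'}(B)$, after noting that $i$ is not too small since $i \geq i_0$. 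None of this is deep, but it must be done cleanly; once the three-way decomposition with bounded errors is in place, the normality of $u$ and $v$ finishes the argument immediately.
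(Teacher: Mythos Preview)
Your approach is essentially the paper's: split $N^R_{i'}(B,w)$ and $R_{i'}(B)$ into a $P$-part on $[0,i)$, a $Q$-part on $[i,i')$, and a bounded boundary contribution, then use normality of $u$ and $v$. Your explicit $\theta_1,\theta_2$ bookkeeping is in fact more careful than the paper, which silently absorbs the boundary terms.

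There is one slip to fix. From $\limsup_i p_i/q_i<\infty$ you get $Q_i(B)\le O\bigl(C^{|B|}\bigr)\,P_i(B)$, \emph{not} $P_i(B)=O(Q_i(B))$; the inequality runs the other way. Your alternative ``$P_i(B)\le R_{i'}(B)+O(1)$'' is correct and is exactly how the paper handles that term, so use it. The same reversed inequality bites at the very end: $Q_{i'}(B)\le R_{i'}(B)(1+o(1))$ is false in general (take $i'=i+1$, where $Q_{i'}(B)\approx Q_i(B)$ and $R_{i'}(B)\approx P_i(B)$, and these can differ by a factor $\sim C^{|B|}$). What is true is $Q_{i'}(B)=Q_i(B)+Q_{i,i'}(B)\le 2C^{|B|}P_i(B)+Q_{i,i'}(B)\le (1+2C^{|B|})R_{i'}(B)+O(1)$. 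The paper deals with this by choosing the working precision $\epsilon'=\epsilon/(3+4C^{|B|})$ at the outset and bounding everything directly against $R_{i'}(B)$; if you do the same your argument goes through without change.
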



\begin{proof}
Fix $C$ such that $\frac{p_i}{q_i}\leq C$ for all $i$. 
Since $\frac{p_i}{q_i}\leq C$, and $B$ has infinite expectation with respect to $P$ and $Q$,
for large enough $i$ we have that $Q_i(B)\leq 2C^{|B|} P_i(B)$. 
Let $i_0$ be such that for all $i \geq i_0$ we have that
$|N^Q_i(B,v)-Q_i(B)|<\epsilon' Q_i(B)$ and 
$|N^P_i(B,u)-P_i(B)|<\epsilon' P_i(B)$ where $\epsilon'=\frac{\epsilon}{3+4C^{|B|}}$.
Let $i'>i \geq i_0$. We then have

\begin{equation*}
\begin{split}
|N^R_{i'}(B,w)-R_{i'}(B)|& \leq |N^P_i(B,u)-P_i(B)|+|N^Q_{i,i'}(B,v)-Q_{i,i'}(B)|
\\ &
\leq \epsilon' P_i(B)+ |N^Q_{i}(B,v)-Q_{i}(B)|+| N^Q_{i'}(B,v)-Q_{i'}(B)|
\\ &
\leq 
\epsilon' P_i(B)+ \epsilon' Q_i(B)+ \epsilon' Q_{i'}(B)
\\ &
\leq
\epsilon' R_i(B)+ 2\epsilon' Q_{i'}(B)
\\ &
\leq \epsilon' R_i(B)+ 2\epsilon' (Q_i(B)+ Q_{i,i'}(B))
\\ &
\leq \epsilon' R_i(B)+ 2\epsilon' (2C^{|B|} P_i(B) +Q_{i,i'}(B))
\\ &
\leq \epsilon' R_i(B)+ 2\epsilon' (1+2C^{|B|})R_{i'}(B)
\\ &
\leq \epsilon' (3+4C^{|B|}) R_{i'}(B) \leq \epsilon R_{i'}(B).
\end{split}
\end{equation*}

\end{proof}

We now prove the following theorem which gives the last case
of Theorem~\ref{differences}.

\begin{thm} \label{hc}
Let $Q$ be a basic sequence which is infinite in limit and
$1$-divergent.  Then the set $\RNQ\sm \NQ$ is $\dt$-complete.
\end{thm}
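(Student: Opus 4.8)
The plan is to build a continuous reduction $\varphi = \varphi_2\circ \varphi_1$ from a fixed $D_2(\bP^0_3)$-complete set to $\RNQ\sm \NQ$. As in \refs{4cases}, take the complete set to be something like $C\sm D$ where $C=\{x\colon x(2n)\to\infty\}$ and $D=\{x\colon x(2n+1)\to\infty\}$ (or a convenient variant), so that I want $\varphi(x)\in\RNQ$ iff $x\in C$ (say) and $\varphi(x)\notin\NQ$ iff $x\notin D$. The real content is that, unlike the four easy cases, I cannot simply perturb distribution normality independently; instead I must destroy \emph{normality} while \emph{preserving ratio normality}. The mechanism for this is to take a reference point $z\in\NQ$ and, on a coordinating subsequence of blocks $I_n=[b_{n-1},b_n)$, uniformly \emph{thin out} the occurrences of \emph{all} good blocks $B$ of length up to some slowly growing bound by the same proportional factor $1-\tfrac{1}{x'(2n)}$ — e.g.\ by redirecting a fixed fraction of the positions where $z\res[i,i+|B|-1]$ equals a ``target'' block into a different block. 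If the thinning is done so that \emph{every} good block of a given length loses the same fraction of its occurrences, then all the ratios $N^Q_k(B_1,\varphi_1(x))/N^Q_k(B_2,\varphi_1(x))$ still tend to $1$ (ratio normality survives), while the absolute count $N^Q_k(B,\varphi_1(x))/Q_k(B)$ tends to $1-\delta$ for some $\delta>0$ bounded away from $0$ along the bad subsequence, killing normality.

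The key steps, in order: (1) Fix $z\in\NQ$ and choose $b_n$ growing fast enough that the tail estimates on $N^Q_m(B,z)$ vs.\ $Q_m(B)$ hold for all good $B$ with $\|B\|\le n$ and all $m\ge b_{n-1}$, and that $\big(2^n\sum_{i<n}b_i\big)/b_n\to 0$ and that the relevant $\QNK{k}{b_n}$ dominate $b_{n-1}$. (2) Describe, for each $n$, an operation on $z\res I_n$ parametrized by $x'(2n)$ (the ``normality-destroying'' operation) that deletes a $\tfrac{1}{x'(2n)}$-fraction of occurrences of \emph{each} good block of length $\le\sqrt[6]{x'(2n)}$ in a \emph{ratio-uniform} way; the cleanest device is probably to use $\psi_{P,Q}$-type digit collapsing together with the deletion scheme of \reft{NandRN}, or more directly to pick, for each length $k$, one ``sink'' block and move a fixed fraction of positions of every length-$k$ block into the sink — but this over-fills the sink, so instead I will delete occurrences symmetrically by changing a digit by $1$ as in the $r$-function of \reft{NandRN}, arranging the bookkeeping so the proportional loss is the same for all blocks. (3) Compose with a second, independent operation parametrized by $x'(2n+1)$ that controls $\NQ$-membership in the other direction (e.g.\ a $\Xi$-type operation or a further thinning) so that $\varphi(x)\notin\NQ$ exactly when $x\notin D$, using that $\Xi$ does not affect the ratio structure. (4) Verify: if $x\in C$ then along every tail the thinning fractions $\to 0$, so by a running-sum induction across the intervals $I_n$ (exactly as in \reft{NandRN}, with \refl{lem:t2} used to glue the estimates across block boundaries) one gets $\varphi(x)\in\RNQ$; if $x\notin C$ then on the subsequence where $x(2n)$ is constant $=c$, the target block loses a fixed fraction while, crucially, the ratios between good blocks of equal length are unchanged, so $\varphi(x)\in\RNQ\sm\NQ$ provided also $x\notin D$ forces failure of normality. (5) Handle separately the case $Q$ is $1$-divergent but only $k_0$-divergent for some largest $k_0$: then only blocks of length $\le k_0$ matter, the construction simplifies, and the same argument goes through. (6) Finally, since $\RNQ\sm\NQ$ is itself manifestly in $D_2(\bP^0_3)$ (both $\RNQ$ and $\NQ$ are $\bP^0_3$ by \reft{NandRN}), completeness follows.

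The main obstacle I expect is Step~(2): engineering a single operation that simultaneously (a) deletes a prescribed, uniform-across-blocks proportion of occurrences of \emph{all} good blocks up to a growing length, (b) does not create spurious occurrences of other good blocks in an uncontrolled way (the ``sparse block'' collision bookkeeping from \reft{NandRN} will reappear and must be redone to respect the ratio-uniformity), and (c) leaves $\RNQ$ intact while definitively breaking $\NQ$. The delicate point is that ratio normality compares \emph{ratios}, so I need the error terms $|N^Q_k(B_i,\varphi_1(x))-(1-\delta_n)Q_k(B_i)|$ to be $o(Q_k(B_i))$ \emph{uniformly} in the finitely many relevant blocks at stage $n$, with the \emph{same} $\delta_n$ for all of them up to lower-order error; getting this uniformity, especially across the boundary between $I_{n-1}$ and $I_n$ where $\delta_{n-1}\ne\delta_n$, is where \refl{lem:t2} and a careful inductive hypothesis on the partial counts will be needed. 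A secondary subtlety is choosing the $D_2(\bP^0_3)$-complete source set and the coupling of the two coordinates of $x$ so that the two operations genuinely act independently on $\RNQ$ and on $\NQ$; this should be routine once Step~(2) is in place, following the template of \reft{4cases}.
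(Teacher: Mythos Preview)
Your core insight is right and is essentially the paper's: the way to land in $\RNQ\sm\NQ$ is to scale all block counts of a given length by the \emph{same} factor, which preserves ratios while destroying absolute normality. You even name the right device (``$\psi_{P,Q}$-type digit collapsing''). The paper implements this cleanly by fixing, for each $m$, the scaled basic sequence $P_m=(\max(2,\lfloor\tfrac{m+1}{m+2}q_i\rfloor))_i$ and a $P_m$-normal point $w_m$; the intermediate point $y$ is then built by setting $y\res I_n=w_{x'(2n+1)}\res I_n$. Since $Q$ is infinite in limit, $(P_x)_i(B)\sim(P_x)^{(|B|)}_i$ for every fixed block $B$, so $P_x$-normality of $y$ forces all length-$k$ blocks to have the same count $\sim(P_x)^{(k)}_i$; hence $y\in\RNQ$ automatically, and $y\in\NQ$ iff $(P_x)^{(k)}_i/Q^{(k)}_i\to1$, which a short computation (Claim~\ref{tc}) shows holds iff $x(2n+1)\to\infty$. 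This sidesteps entirely the collision bookkeeping you worry about in Step~(2): there is no deletion at all, just a different source sequence whose block statistics are uniformly rescaled by construction, and Lemma~\ref{lem:t2} is exactly what glues the $w_m$-pieces across the $I_n$ boundaries.

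However, your assignment of the two operations to the two coordinates is backwards, and your proposed second operation is the wrong tool. The ``uniform thinning'' mechanism decides whether the output is in $\NQ$ versus $\RNQ\sm\NQ$; it \emph{always} lands in $\RNQ$. It therefore cannot be the operation tied to $C$, which you say should decide $\RNQ$-membership. What you need on the $C$-coordinate is an operation that \emph{destroys ratio normality} when $x(2n)$ is bounded --- and that is precisely the $\Theta_{x'(2n),x'(2n)}$ operation from \refs{4cases} (change a $\tfrac{1}{x'(2n)}$-fraction of the $0_k$-starts to $1$'s, unbalancing the $0_k/1_k$ ratio), not $\Xi$. The $\Xi$ operation only perturbs distribution normality and plays no role in this case. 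Relatedly, your literal target ``$\varphi(x)\in\RNQ$ iff $x\in C$ and $\varphi(x)\in\NQ$ iff $x\in D$'' is inconsistent: for $x\in D\sm C$ it would force $\varphi(x)\in\NQ\sm\RNQ=\emptyset$. The paper's scheme is: first build $y$ from the $w_m$'s via $x(2n+1)$ (this decides $\NQ$ versus $\RNQ\sm\NQ$), then apply $\Theta_{x'(2n),x'(2n)}$ to $y$ (this kicks the result out of $\RNQ$ when $x(2n)$ is bounded and is asymptotically invisible when $x(2n)\to\infty$). With the coordinates swapped and $\Xi$ replaced by $\Theta$, your outline becomes the paper's proof.
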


\begin{proof}
Let $C, D\subseteq \ww$ be as in \refs{4cases}.
For $k, n\in \N$, recall that $\QNK kn$ denotes the sum
$\QNK kn= \sum_{i=1}^n \frac{1}{q_i q_{i+1}\cdots q_{i+k-1}}$.


For each $m$, let $P_m=( \max(2,\lfloor \frac{m+1}{m+2} q_i \rfloor))_{i=1}^\infty$.
Recall $(P_m)_i(B)$ denotes the  expectation of $B$ in the first $i$
digits of $P_m$, as in Equation~\ref{eqn:Qn},
and $(P_m)_i^{(k)}$ denote the expectation of $0_k$ in the first $i$ digits of $P_m$.
We fix for the rest of the proof
$w_m\in \mathscr{N}(P_m)$, which we identify with a $P_m$-Cantor series expansion.

Given a strictly increasing sequence $\{ b_n\}$ (which we will choose below)
and an $x \in \ww$ we define a new basic sequence $P_x=P(x, \{ b_n\},Q)=(p^x_i)$ as follows.
Let $x'(n)=\min \{ x(n),n\}$.
Let $p^x_i=P_{x'(2n+1)}(i)=\max (2,\lfloor \frac{x'(2n+1)+1}{x'(2n+1)+2} q_i \rfloor) $
for $i \in [b_{n-1},b_{n})$.
Note that $p_i \leq q_i$ for all $i$,
and if $x(2n+1)\to \infty$ then $\lim_i \frac{p^x_i}{q_i}=1$. Also
$\QNK kn \leq \PxNK kn$ for all $k, n$. 

\begin{claim} \label{tc}
Suppose $Q$ is $k$-divergent.
If $x(2n+1)\to \infty$ then $\lim_i \frac{\QNK ki}{\PxNK ki} \to 1$.  If $x(2n+1)$
does not tend to $\infty$ and $\lim_n \frac{\QNK k{b_{n-1}}}{\QNK k{b_{n-1},b_{n}}}=0$,
there is a subsequence of $\frac{\QNK ki}{\PxNK ki}$ which is bounded away from $1$.
\end{claim}

\begin{proof}
If $x(2n+1)\to \infty$, then $\ell_n=\frac{x(2n+1)+1}{x(2n+1)+2} \to 1$
and so $p^x_i/q_i$ tends to $1$. Note that
$\left( \frac{1}{q_jq_{j+1}\cdots q_{j+k-1}}\right) / \left( \frac{1}{p_jp_{j+1}\cdots p_{j+k-1}}\right)
=(\ell_n)^k$ for all $j \in [b_n,b_{n+1})$. We then have that
for all $k$ that $\QNK ki/ \PxNK ki\to 1$ using the simple fact that if $c_j,d_j \geq 0$,
$\sum_j c_j=\infty$, $\sum_j d_j=\infty$, and $c_j/d_j \to 1$, then
$\left( \sum_{j=1}^i c_j \right) / \left( \sum_{j=1}^i d_j \right) \to 1$.

If $x(2n+1)$ does not tend to $\infty$, then $\exists u <1$ such that $\ell_n \leq u$
for infinitely many $n$. So, for infinitely many $n$ we have that
$\QNK k{b_{n-1}, b_{n}} \leq u^k \PxNK k{b_{n-1}, b_{n}}$.
Thus, for infinitely many $n$ 
\begin{equation*}
\begin{split}
\frac{\QNK k{b_{n}}}{\PxNK k{b_{n}}}& = \frac{\QNK k{b_{n-1}}+ \QNK k{b_{n-1}, b_{n}}}
{\PxNK k{b_{n-1}}+ \PxNK k{b_{n-1}, b_{n}}}
\leq \frac{\QNK k{b_{n-1}}+ \QNK k{b_{n-1}, b_{n}}} {\QNK k{b_{n-1}}+ \PxNK k{b_{n-1}, b_{n}}}
\leq \frac{\QNK k{b_{n-1}}+ \QNK k{b_{n-1}, b_{n}}} {\QNK k{b_{n-1}}+ (\frac{1}{u^k}) \QNK k{b_{n-1}, b_{n}}}
\\ &
\leq \frac{\QNK k{b_{n-1}}+ (\frac{1}{u^k}) \QNK k{b_{n-1}, b_{n}}+(1-\frac{1}{u^k}) \QNK k{b_{n-1}, b_{n}}}
{\QNK k{b_{n-1}}+ (\frac{1}{u^k}) \QNK k{b_{n-1}, b_{n}}}
\\ &
= 1- \left( \frac{1}{u^k} -1\right) \frac{ \QNK k{b_{n-1}, b_{n}}}{\QNK k{b_{n-1}}+
(\frac{1}{u^k}) \QNK k{b_{n-1}, b_{n}}}
\\ &
\leq 1- \left( \frac{1}{u^k}-1 \right) \frac{ \QNK k{b_{n-1}, b_{n}}}{ (\frac{2}{u^k}) \QNK k{b_{n-1}, b_{n}}}
= 1- \left( \frac{1}{u^k} -1\right) \frac{u^k}{2}
\\ &
= 1- \frac{1}{2} (1-u^k)
\end{split}
\end{equation*}

\end{proof}

First assume that $Q$ is $k$-divergent for all $k$. 
Using Lemma~\ref{lem:t2} we then inductively pick the $b_n$ satisfying the following:

\begin{enumerate}
\item \label{lp1}
For all $m_1,m_2 \leq n+1$ and all $i>b_{n-1}$, let $P=P(m_1,m_2,i)$
be defined by: $P\res [0,b_{n-1})= P_{m_1} \res [0,b_{n-1})$ and $P\res [b_{n-1},i)= P_{m_2}
\res [b_{n-1},i)$. Let $w\res [0,b_{n-1})=w_{m_1}$ and $w \res [b_{n-1},i)=w_{m_2}\res [b_{n-1},i)$.
Then for any $B$ with $\|B\|\leq n+1$
we have $|N^P_i(B, w)-P_i(B)|<\frac{1}{2^n} P_i(B)$.
\item \label{lp2}
$(P_m)_{b_{n-1},b_n}(0_{2n})  > 2^n b_{n-1}$ for all $m \leq n$.
\end{enumerate}

Given $x \in \ww$, we define $\varphi_1(x)$ as follows. Suppose $\varphi_1(x)\res b_{n-1}$ has been defined.
Let $y \res [b_{n-1},b_{n})= w_{x(2n+1)}\res [b_{n-1},b_{n})$. 
Then we perform the operation $\Theta_{x'(2n),x'(2n)}$ of Section~\ref{section:4cases}
on $y \res [b_{n-1},b_{n})$ to produce
$\varphi_1(x) \res [b_{n-1},b_{n})$. 
This defines $\varphi_1(x)\res [b_{n-1},b_{n})$. Doing this for all blocks $[b_{n-1},b_{n})$ produces $\varphi_1(x)$.

If $x \notin C$, that is $x(2n)$ does not tend to $\infty$, then $\varphi(x) \notin \RNQ$.
This is because if $k=\liminf x(2n)$, then there will be infinitely many $n$ for which 
$0_k$  occurs in $[0,b_{n})$ at most
\begin{equation}
\begin{split}
N^{P_{x'(2n+1)}}_{b_n}(0_k,w_{x'(2n+1)})(1-\frac{1}{k})+b_{n-1} &
\leq (P_{x'(2n+1)})^{(k)}_{b_{n}} (1-\frac{1}{k})(1+\frac{1}{2^n})+b_{n-1}
\\ &
\leq (P_{x'(2n+1)})^{(k)}_{b_{n}} (1-\frac{1}{k}+\frac{1}{2^{n-1}})
\\ &
\leq  (P_{x'(2n+1)})^{(k)}_{b_{n}} (1-\frac{1}{2k})
\end{split}
\end{equation}
many times 
while $1_k$ occurs  at least
\begin{equation}
\begin{split}
N^{P_{x'(2n+1)}}_{b_{n}}(1_k,w_{x'(2n+1)}) -b_{n-1}& \geq (P_{x'(2n+1)})^{(k)}_{b_{n}} (1-\frac{1}{2^n})-b_{n-1}
\\ & \geq (P_{x'(2n+1)})^{(k)}_{b_{n}} (1-\frac{1}{2^{n-1}})
\end{split}
\end{equation}
many times.

If $x(2n)\to \infty$ but $x(2n+1)$ does not tend to infinity, then $\varphi(x)\in \RNQ\sm \NQ$.
To see this, first note that the point $y$ as above is in $\RNQ$.
Recall $P_x$ is defined by $P_x \res[b_{n-1},b_{n})=P_{x(2n+1)}\res [b_{n-1},b_{n})$.
We show that $y \in \mathscr{N}(P_x)$, which implies $y \in \RNQ$. 
For any $B$ and for large enough $n$ and for any 
$b_{n-1} \leq i <b_{n}$ we have from property (\ref{lp1}) of the $b_n$:
\begin{equation} \label{qq1}
| N^{P'}_i(B,y')-P'_i(B)|< \frac{1}{2^n} P'_i(B),
\end{equation}
where $y'$ and $P'$ are defined by:
\begin{equation*}
\begin{split}
y'\res[0,b_{n-1})&=w_{x(2n-1)} \res [0,b_{n-1})
\\ 
y' \res [b_{n-1},i)&=w_{x(2n+1)}\res i
\\ 
P' \res [0,b_{n-1})&=P_{x(2n-1)}
\\ 
P'\res [b_{n-1},i)&= P_{x(2n+1)}\res [b_{n-1},i).
\end{split}
\end{equation*}

Also, from property (\ref{lp2}) of the $b_n$ we have:
\begin{equation} \label{qq2}
|N^{P'}_i(B,y')-N^{P_x}_i(B,y)|\leq b_{n-2}<\frac{1}{2^n}
(P_x)_{b_{n-1}}(B)\leq \frac{1}{2^n} (P_x)_i(B).
\end{equation}
Finally,
\begin{equation} \label{qq3}
|P'_i(B)-(P_x)_i(B)|\leq b_{n-2}< \frac{1}{2^n} (P_x)_i(B).
\end{equation}
From Equations \ref{qq1}, \ref{qq2} and \ref{qq3}
we have $|N^{P_x}_i(B,y)-(P_x)_i(B)|\leq \frac{4}{2^{n}} (P_x)_i(B)$.
This shows that $y \in \RNQ$.
Since $x(2n+1)$ does not tend to infinity, then from Claim~\ref{tc} there is a
subsequence  
on which $\frac{Q^{(k)}_{i}}{(P_x)^{(k)}_{i}}$ is bounded away from $1$.
Since $y$ is $P_x$-normal,
we have that $y$ is not $Q$-normal. 

The operation
applied to $y$ to produce $\varphi_1(x)$ does not affect normality or ratio normality
if $x(2n) \to \infty$ (this is just as in ~\refs{4cases}).
So, $\varphi(x)  \in \RNQ \sm \NQ$.

Finally, if $x(n)\to \infty$, then as above $y \in \mathscr{N}(P_x)$.
As $x(2n+1)\to \infty$, we have from Claim~\ref{tc}
that $\lim_i \frac{\QNK ki}{\PxNK ki} \to 1$ and it follows that 
$y \in \NQ$.
Since $x(2n)\to \infty$ as well, from the argument in \refs{4cases}
we also have that $y=\varphi(x)\in \NQ$.

So, in all cases we have that $x \in C\sm D$ iff $\varphi(x) \in \RNQ\sm \NQ$.

Suppose now that there is a largest integer $k_0$ such that $Q$ is $k_0$-divergent.
The proof is essentially identical to that above. We let $P_m$ be as before,
and now let $w_m$ be $\leq k_0$ normal with respect to $P_m$, that is, for all $B$
of length $\leq k_0$
we have $\lim_i \frac{N_i(B,w_m)}{(P_m)_i(B)}=1$. We define $\varphi_1(x)$ by first
defining $y$ exactly as before (using the values $x(2n+1)$).
We then modify $y$ to $\varphi_1(x)$, (using $x(2n)$) but in a slightly different manner.
Namely, we get $\varphi_1(x)\res [b_{n-1},b_{n})$ from $y \res [b_{n-1},b_{n})$ as follows.
Let $A\subseteq [b_{n-1},b_n)$ be the integers $i$ in this interval such that
$y \res [i,i+k_0-1)= 0_{k_0}$. Let $A' \subseteq A$ be the last $\lfloor \frac{|A|}{x(2n)}\rfloor$
many elements of $A$. For each $i \in A'$ we change $y(i)$ from a $0$ to a $1$,
and for all other $i$ in this interval we set $\varphi_1(x)(i)=y(i)$.

If $x(2n)$ does not tend to infinity,
then easily $\varphi(x) \notin \RNQ$ as
$\frac{N^{P_x}_i(1_{k_0},\varphi_1(x))}{N^{P_x}_i(0_{k_0},\varphi_1(x))}$
does not tend to $1$. If $x(2n)$ tends to infinity, then we easily have that $\varphi(x)$ is
in $\NQ$ (or $\RNQ$) iff $y$ is in $\NQ$ (resp.\ $\RNQ$). In this case, as above, we have that if
$x(2n+1)\to \infty$ then $y\in \NQ$, and if $x(2n+1)$ does not tend to infinity
then $y \in \RNQ\sm \NQ$. So, in all cases we have $x \in C\sm D$ iff
$\varphi(x)\in \RNQ\sm \NQ$. 

\end{proof}

\subsection{Further Discussion}

Theorem~\ref{4cases} can be extended further. First, the hypothesis that
$Q=(q_i)$ in infinite in limit can be weakened to the following condition
studied by T.\ \v{S}al\'{a}t \cite{Salat}: $\lim_{N\to \infty} \frac{1}{N} \sum_{i=1}^N \frac{1}{q_i}=0$.
This condition is equivalent to saying that there is a set $D\subseteq \N$ of
density $0$ such that $(q_i)_{i \notin D}$ tends to infinity (see Theorem~1.20 of \cite{Walters}).
Since changing a sequence on a set of density $0$ may affect normality and ratio normality,
we must now use the argument of Theorem~\ref{NandRN}. At stage $n$ of the construction of
$\varphi_1(x)\res I_n$, we again use two operations $\Theta'_{x'(2n)}$ and $\Xi'_{x'(2n+1)}$.
The first operation $\Theta'_{x'(2n)}$ is the operation implicitly described in the proof of Theorem~\ref{NandRN}.
That is, we define the block $B_{i_0}$ exactly as in that proof, and define the sets
$A, A',A''\subseteq I_n$ as in that proof. We then eliminate the occurrences of the block $B_{i_0}$
at the points of $A''$ by applying the digit changing function $r$ as in Theorem~\ref{NandRN}.
Let $w$ be the result of applying this first operation to $z$ (so $w$ is the
$\varphi_1(x)$ of Theorem~\ref{NandRN}). The proof of Theorem~\ref{NandRN}
did not require that $Q$ be infinite in limit, and so we have that
$x(2n)\to \infty$ implies $w \in \NQ$ and $x(2n) \nrightarrow \infty$ implies
$w \notin \RNQ$. The function $r$ changes digits by at most $1$, and does not affect
distribution normality using Remark~\ref{rdn} and the fact that $D$ has density $0$
(changing a sequence on a set of density $0$ does not affect distribution normality). 
So, $w \in \DNQ$. The second operation $\Xi'_{x'(2n+1)}$ is the operation $\Xi_{x'(2n+1)}$
of Theorem~\ref{4cases} except we only apply the operation to digits not in $D$. 
We let $\varphi_1(x)$ be the result of applying these operations to $w$.
The operations $\Xi'_{x'(2n+1)}$ do not affect normality or
distribution normality as $q_i\to \infty$ off of $D$, and so for every block $B$,
$|N^Q_m(B,\varphi_1(x))-N^Q_m(B,w)|$ is bounded with $m$. As in Theorem~\ref{4cases}
we have that $\varphi_1(x)\in \DNQ$ iff $x(2n+1)\to \infty$. So, $\varphi=\varphi_2\circ
\varphi_1$ is a reduction of $C\sm D$ (or $D\sm C$ depending on the case) to
the desired set.

Second, we can prove the version of Theorem~\ref{4cases} with $\NQ$ and $\RNQ$
replaced with $\NkQ$ and $\RNkQ$, provided we assume that $Q$ is $k$-divergent
(and infinite in limit, or more generally
$\lim_{N\to \infty} \frac{1}{N} \sum_{i=1}^N \frac{1}{q_i}=0$).
We proceed as above except in defining the block $B_{i_0}$ used in the first
operation, we only consider the first $\sqrt[6]{x'(2n)}$ many good blocks
$B_1,\dots,B_p$ of length $k$. This makes sense since there is some
block of length $k$, namely $0_k$, which has infinite expectation.
If $w$ again denotes the result of applying the first operation in
all of the $I_n$, then the proof of Theorem~\ref{NandRN} shows that
if $x(2n)\to \infty$ then $|N^Q_m(B,w)-N^Q_m(B,z)|/Q_m(B)\to 0$ for all
blocks $B$ of length $k$. It follows that if $x(2n)\to \infty$
then $w \in \NkQ$ and if $x(2n) \nrightarrow \infty$ then
$w \notin \RNkQ$. Also, $w \in \DNQ$ as above. The second operation
works exactly as in the above argument, so $\varphi_1(x) \in \DNQ$
iff $x(2n+1)\to \infty$. So, $\varphi=\varphi_2\circ \varphi_1$
again gives the desired reduction.

\bibliographystyle{amsplain}


\begin{thebibliography}{10}

\bibitem{AJKMSubshifts}
D.~Airey, S.~Jackson, D.~Kwietniak, and B.~Mance, \emph{Borel complexity of
  sets of normal numbers via generic points in subshifts with specification},
  To appear, Transactions of the American Mathematical Society.

\bibitem{AJKMDynamics}
\bysame, \emph{Borel complexity of the set of generic points of dynamical
  systems with a specification property}, in preparation.

\bibitem{AireyManceHDDifference}
D.~Airey and B.~Mance, \emph{On the {H}ausdorff dimension of some sets of
  numbers defined through the digits of their {$Q$}-{C}antor series
  expansions}, J. Fractal Geom. \textbf{3} (2016), no.~2, 163--186, MR3501345.

\bibitem{AireyManceVandehey}
D.~Airey, B.~Mance, and J.~Vandehey, \emph{Normality preserving operations for
  {C}antor series expansions and associated fractals, {II}}, New York J. Math.
  \textbf{21} (2015), 1311--1326, MR3441645.

\bibitem{BecherHeiberSlamanAbsNormal}
V.~Becher, P.~A. Heiber, and T.~A. Slaman, \emph{Normal numbers and the {B}orel
  hierarchy}, Fund. Math. \textbf{226} (2014), no.~1, 63--78.

\bibitem{BecherSlamanNormal}
V.~Becher and T.~A. Slaman, \emph{On the normality of numbers to different
  bases}, J. Lond. Math. Soc. (2) \textbf{90} (2014), no.~2, 472--494.

\bibitem{Cantor}
G.~Cantor, \emph{\"{U}ber die einfachen {Z}ahlensysteme}, Zeitschrift f\"{u}r
  Math. und Physik \textbf{14} (1869), 121--128.

\bibitem{ErgNum}
K.~Dajani and C.~Kraaikamp, \emph{Ergodic theory of numbers}, Carus
  Mathematical Monographs, vol.~29, Mathematical Association of America,
  Washington, DC, 2002.

\bibitem{ErdosRenyiConvergent}
P.~Erd\H{o}s and A.~R\'{e}nyi, \emph{On {C}antor's series with convergent $\sum
  1/q_n$}, Annales Universitatis L. E\"{o}tv\"{o}s de Budapest, Sect. Math.
  \textbf{2} (1959), 93--109.

\bibitem{ErdosRenyiFurther}
\bysame, \emph{Some further statistical properties of the digits in {C}antor's
  series}, Acta Math. Acad. Sci. Hungar \textbf{10} (1959), 21--29.

\bibitem{Galambos}
J.~Galambos, \emph{Representations of real numbers by infinite series}, Lecture
  Notes in Math., vol. 502, Springer-Verlag, Berlin, Hiedelberg, New York,
  1976.

\bibitem{Kechris}
A.~Kechris, \emph{Classical descriptive set theory}, Graduate Texts in
  Mathematics, vol. 156, Springer-Verlag, New York, 1995.

\bibitem{KiLinton}
H.~Ki and T.~Linton, \emph{Normal numbers and subsets of {N} with given
  densities}, Fund. Math. \textbf{144} (1994), no.~2, 163--179.

\bibitem{Mance}
B.~Mance, \emph{Construction of normal numbers with respect to the {$Q$}-cantor
  series expansion for certain {$Q$}}, Acta Arith. \textbf{148} (2011),
  135--152.

\bibitem{Mance4}
\bysame, \emph{Typicality of normal numbers with respect to the {C}antor series
  expansion}, New York J. Math. \textbf{17} (2011), 601--617.

\bibitem{ppq1}
\bysame, \emph{Number theoretic applications of a class of {C}antor series
  fractal functions, {I}}, Acta Math. Hungar. \textbf{144} (2014), no.~2,
  449--493.

\bibitem{RauzyNormalPreserving}
G.~Rauzy, \emph{Nombres normaux et processus d\'{e}terministes}, Acta Arith.
  \textbf{29} (1976), no.~3, 211--225.

\bibitem{RenyiProbability}
A.~R\'{e}nyi, \emph{On a new axiomatic theory of probability}, Acta Math. Acad.
  Sci. Hungar. \textbf{6} (1955), 329--332.

\bibitem{Renyi}
\bysame, \emph{On the distribution of the digits in {C}antor's series}, Mat.
  Lapok \textbf{7} (1956), 77--100.

\bibitem{RenyiSurvey}
\bysame, \emph{Probabilistic methods in number theory}, Shuxue Jinzhan
  \textbf{4} (1958), 465--510.

\bibitem{Turan}
P.~Tur\'{a}n, \emph{On the distribution of ``digits'' in {C}antor systems},
  Mat. Lapok \textbf{7} (1956), 71--76.

\bibitem{Salat}
T.~\u{S}al\'{a}t, \emph{Zu einigen {F}ragen der {G}leichverteilung (mod 1)},
  Czech. Math. J. \textbf{18 (93)} (1968), 476--488.

\bibitem{Wall}
D.~D. Wall, \emph{Normal numbers}, Ph.D. thesis, Univ. of California, Berkeley,
  Berkeley, California, 1949.

\bibitem{Walters}
P.~Walters, \emph{An introduction to ergodic theory}, Springer-Verlag, New
  York, 1982.

\end{thebibliography}

\providecommand{\bysame}{\leavevmode\hbox to3em{\hrulefill}\thinspace}
\providecommand{\MR}{\relax\ifhmode\unskip\space\fi MR }
\providecommand{\MRhref}[2]{%
  \href{http://www.ams.org/mathscinet-getitem?mr=#1}{#2}
}
\providecommand{\href}[2]{#2}

\end{document}